\theoremstyle{plain}
 \newtheorem{theorem}{Theorem}
 \newtheorem{corollary}{Corollary}
\newtheorem{lemma}{Lemma}
\DeclareMathOperator{\re}{Re}
\newcommand{\pts}[1]{\left(#1\right)}
\newcommand{\set}[1]{\left\{#1\right\}}
\newcommand{\eps}{\varepsilon}
\begin{document}

\title[Practical numbers and the distribution of divisors]{Practical numbers and \\ the distribution of divisors}

\author{Andreas Weingartner}
\address{ 
Department of Mathematics,
351 West University Boulevard,
 Southern Utah University,
Cedar City, Utah 84720, USA}
\email{weingartner@suu.edu}
%\date{March 3, 2015}
\subjclass[2010]{11N25, 11N37}

\begin{abstract} 
An integer $n$ is called practical if every $m\le n$ can be written as a sum of distinct divisors of $n$.
We show that the number of practical numbers below $x$ is asymptotic to $c x/\log x$, as conjectured by Margenstern.
We also give an asymptotic estimate for the number of integers below $x$ whose maximum ratio of consecutive divisors is at most $t$, valid uniformly for $t\ge 2$. 
\end{abstract}

\maketitle

\section{Introduction}

An integer $n\ge 1$ is called \emph{practical} if all positive integers $m\le n$ can be written as a sum of distinct divisors of $n$. Fibonacci used practical numbers in connection with 
Egyptian fractions. The term \emph{practical number} is due to Srinivasan \cite{Sri}, who 
gave a partial classification of these numbers.
Stewart \cite{Stew} and Sierpinski \cite{Sier} showed that an integer 
$n \ge 2$ with prime factorization $n=p_1^{\alpha_1} \cdots p_k^{\alpha_k}$, $p_1<p_2<\ldots < p_k$,
is practical if and only if 
$$p_j\le 1+\sigma\bigg( \prod_{1\le i \le j-1} p_i^{\alpha_i} \bigg) \qquad (1\le j \le k),$$
where $\sigma(n)$ denotes the sum of the divisors of $n$.
Here and below, the last product is understood to be $1$ when $j=1$.
In analogy with well-known conjectures about the sequence of prime numbers, 
Melfi \cite{Mel} found that every even integer is the sum of two practical numbers, and that there
are infinitely many practical numbers $n$ such that $n-2$ and $n+2$ are also practical.

Let $P(x)$ denote the number of practical numbers not exceeding $x$. Estimates for $P(x)$ were obtained
by Erd\H{o}s and Loxton \cite{EL},
Hausman and Shapiro \cite{HS}, Margenstern \cite{Mar}, Tenenbaum \cite{Ten86} and finally Saias \cite{Saias1}, who showed that there are two positive constants $c_1$ and $c_2$ such that
\begin{equation*}
c_1 \frac{x}{\log x} \le  P(x) \le c_2 \frac{x}{\log x} \qquad (x\ge 2).
\end{equation*}
Margenstern's conjecture \cite{Mar} that $P(x)$ is asymptotic to $cx/\log x$ is settled by the following result. 

\begin{theorem}\label{thmP}
There is a positive constant $c$ such that for $x\ge 3$
$$
P(x)=\frac{c x}{\log x} \left\{1+O\left(\frac{\log \log x}{\log x}\right)\right\} .
$$
\end{theorem}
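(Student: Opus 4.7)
The approach I would take is to deduce the estimate for $P(x)$ from the uniform asymptotic for the count of $t$-dense integers promised in the abstract. By a standard greedy-algorithm argument, $n$ is practical if and only if every pair of consecutive divisors $d_i(n)<d_{i+1}(n)$ of $n$ satisfies $d_{i+1}(n)/d_i(n)\le 2$; thus $P(x)=B(x,2)$, where
\[
B(x,t):=\#\{n\le x:\max_i d_{i+1}(n)/d_i(n)\le t\}.
\]
Theorem~\ref{thmP} therefore reduces to proving the uniform estimate
\[
B(x,t)=H(t)\,\frac{x}{\log x}\biggl\{1+O\biggl(\frac{\log\log x}{\log x}\biggr)\biggr\}\qquad(t\ge 2),
\]
and setting $c=H(2)$.

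To establish this estimate I would analyse the Dirichlet series $F_t(s):=\sum_{n\in\mathcal B(t)}n^{-s}$. Writing $n=p_1^{\alpha_1}\cdots p_k^{\alpha_k}$ with $p_1<\cdots<p_k$ and $n_j=p_1^{\alpha_1}\cdots p_j^{\alpha_j}$, membership in $\mathcal B(t)$ is governed by the recursive constraint $p_{j+1}\le t\,\sigma(n_j)$, which is the $t$-dense analogue of the Stewart--Sierpinski criterion. Exploiting this recursion via summation over the largest prime factor should produce a factorisation $F_t(s)=\zeta(s)\,G_t(s)$, where $G_t$ is analytic and non-vanishing in a standard zero-free region of $\zeta$, with $G_t(1)$ bounded uniformly in $t\ge 2$. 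A Perron / contour-shift argument using the Vinogradov--Korobov zero-free region and the classical bounds for $1/\zeta$ on its boundary then yields the stated asymptotic with $H(t)=G_t(1)$. The constant $c=G_2(1)$ will admit an explicit representation as a convergent product over primes, matching the shape imposed by the recursion.

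The principal obstacle is the \emph{uniform} control of $G_t(s)$ and of the Perron remainder as $t$ varies in $[2,\infty)$. The local factors of $G_t$ at a prime $p$ depend on $t$ through cut-offs $p\le t\,\sigma(n_j)$ that couple small primes with larger ones, so there is no literal Euler product for $F_t$; one has to proceed by induction on the number of distinct prime factors, or by recasting the recursion as a renewal-type integral equation in $\log x$. Securing the sharper error term $O(\log\log x/\log x)$, rather than the $O(1/\log x)$ delivered by a routine Selberg--Delange application, further requires optimising the contour parameters as functions of $x$ and tracking the dependence on $t$ at each step. A subsidiary but non-trivial step is to verify the equivalence between practicality and $2$-denseness cleanly enough that the constant $c$ of Theorem~\ref{thmP} is identified precisely as $G_2(1)$.
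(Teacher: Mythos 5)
Your first step fails: the equivalence you assert between practicality and the $2$-dense condition does not hold. It is true that if every ratio of consecutive divisors of $n$ is at most $2$ then $n$ is practical (a greedy induction, using $d_1+\cdots+d_j\ge 2d_j-1$). But the converse is false. By Stewart--Sierpinski, $n=2\cdot3\cdot13=78$ is practical, since $2\le 2$, $3\le 1+\sigma(2)=4$ and $13\le 1+\sigma(6)=13$; yet among its divisors $1,2,3,6,13,26,39,78$ the ratio $13/6>2$, so $78$ is not $2$-dense. The Stewart--Sierpinski bound is $p_{j}\le 1+\sigma(n_{j-1})$, which for practical $n_{j-1}$ is strictly weaker than your $p_{j}\le 2n_{j-1}$ whenever $\sigma(n_{j-1})>2n_{j-1}-1$. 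So the set of practical numbers \emph{strictly contains} the set of $2$-dense integers, and since both have counting functions $\asymp x/\log x$ with (as it turns out) different leading constants, the identity $P(x)=D(x,2)$ you use to launch the argument is simply wrong. The correct framework, which the paper sets up, is to allow a general cut-off $\theta(n)$ in the defining inequality $p_j\le\theta(n_{j-1})$: practical numbers correspond to $\theta(n)=1+\sigma(n)$, while $t$-dense integers correspond to $\theta(n)=nt$, and these are genuinely different instances because $1+\sigma(n)$ is not a constant multiple of $n$.

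Beyond this, the analytic machinery you propose is not the one the paper uses, and it faces the exact difficulty you identify: there is no Euler product for the generating Dirichlet series, since membership in $\mathcal B$ is determined by a chain of coupled inequalities among the prime powers, not by independent local conditions. The paper avoids Perron's formula and $\zeta$ zero-free regions entirely. It instead proves the exact functional equation $[x]=\sum_{n\le x}\chi(n)\,\Phi(x/n,\theta(n))$ (Lemma~3), uses the Buchstab-type asymptotic for $\Phi(x,y)$ from Lemma~2, and converts the resulting relation into a renewal-type integral equation for $B(x)$ which is solved by Laplace transforms against the known solution $d(v)$ of equation~(4). The error term $O(\log\log x/\log x)$ in Theorem~1 then arises not from optimizing a contour but from the growth $1+\sigma(n)=O(n\log\log n)$ of the cut-off, fed through Theorem~6 with $f(x)\asymp\log\log x$. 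Your suggestion to ``recast the recursion as a renewal-type integral equation in $\log x$'' is in fact the right instinct, but it needs to be paired with the correct cut-off $\theta(n)=1+\sigma(n)$ rather than with a reduction to $D(x,2)$.
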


Theorem \ref{thmP} is a consequence of Theorem \ref{thmB}, in which
the study of $P(x)$ is viewed as a special case of the following problem.
Let $\theta$ be a real-valued arithmetic function. 
Let $\mathcal{B}$ be the set of positive integers containing $n=1$ and all those $n \ge 2$ with  $n=p_1^{\alpha_1} \cdots p_k^{\alpha_k}$, $p_1<p_2<\ldots < p_k$, which satisfy 
\begin{equation}\label{Bdef}
p_j \le \theta\bigg( \prod_{1\le i \le j-1} p_i^{\alpha_i} \bigg) \qquad (1\le j \le k).
\end{equation}
Let $B(x)$ be the number of integers $n\le x$ in $\mathcal{B}$.
Note that if $\theta(n)=\sigma(n)+1$, then $B(x)=P(x)$.

\begin{theorem}\label{thmB}
Assume that $\theta(n)$ satisfies $\theta(1)\ge 2$ and 
$$ n\le  \theta(n) \le A n (\log 2n)^a (\log\log 3n)^b   \qquad (n\ge 1)$$
for constants $A$, $a$, $b$ with $A\ge 1$, $0\le a \le 1$. If $a<1$, then 
$$
B(x)=\frac{c_\theta x}{\log x} \Big\{1+O\big( (\log x)^{a-1} (\log\log x)^b  \big)\Big\}
\qquad (x\ge 3) .
$$
If $a=1$ and $b<-1$, then
$$
B(x)=\frac{c_\theta x}{\log x} \Big\{1+O\big( (\log\log x)^{b+1}  \big)\Big\}
\qquad (x\ge 3) .
$$
In either case, $c_\theta$ is a positive constant depending on $\theta$. The 
implied constant in the error term depends on $A$, $a$ and $b$.
\end{theorem}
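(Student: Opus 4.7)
The engine of the argument is the following self-similarity of $\mathcal{B}$: every $n \in \mathcal{B}$ with $n > 1$ factors uniquely as $n = m p^\alpha$, where $m \in \mathcal{B}$, $\alpha \ge 1$, and $p$ is a prime satisfying $P^+(m) < p \le \theta(m)$, with $P^+(m)$ denoting the largest prime factor of $m$ and $P^+(1) = 1$. Conversely, every such triple produces an element of $\mathcal{B}$. I would exploit this decomposition in three stages.

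\emph{Stage 1. Coarse upper bound.} The hypothesis $\theta(n) \le A n (\log 2n)^a (\log \log 3n)^b$ with $0 \le a \le 1$ is tuned so that a Hall--Tenenbaum / Rankin sieve argument along the lines of Saias \cite{Saias1} yields the crude estimate $B(x) \ll x/\log x$. This bound is an indispensable input to the error control in the later stages.

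\emph{Stage 2. Reduction to an integral equation.} Starting from the identity
\[
B(x) \log x \;=\; \sum_{\substack{n \in \mathcal{B} \\ n \le x}} \log n \;+\; \int_1^x \frac{B(t)}{t}\, dt,
\]
whose second term comes from $\sum_{n \in \mathcal{B},\,n \le x} \log(x/n) = \int_1^x B(t)/t\,dt$, I would evaluate the sum on the right using the structural decomposition. Writing $\log n = \log m + \alpha \log p$ in $n = m p^\alpha$, the inner sum over primes $p \in (P^+(m), \min(\theta(m), x/m)]$ is handled via the prime-number theorem with effective error term and contributes essentially $\min(\theta(m) - m,\; x/m)$. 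The outcome is a renewal-type integral equation for $B$ with an explicit inhomogeneous term built from $\theta$.

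\emph{Stage 3. Solving the equation.} Setting $F(x) = B(x)\log x/x$, the integral equation can be solved by iteration, yielding $F(x) \to c_\theta$, where $c_\theta$ admits a representation as a convergent weighted sum over $m \in \mathcal{B}$. The convergence rate is dictated by the tail of $\theta(n)/n$: partial summation of the series defining $c_\theta$, using $\theta(n) \le A n (\log 2n)^a (\log\log 3n)^b$, produces the error $(\log x)^{a-1}(\log\log x)^b$ when $a < 1$ and the error $(\log\log x)^{b+1}$ when $a = 1,\ b < -1$.

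The principal obstacle I anticipate is Stage 3 in the borderline case $a = 1$, $b < -1$, where $\theta(n)/n$ is allowed to grow almost as fast as $\log n$ and the sum defining $c_\theta$ is only barely convergent. Extracting the sharp error $(\log\log x)^{b+1}$ in this regime requires a careful two-term partial summation on a series whose general term is already at the edge of summability. Away from this boundary the three stages combine via fairly standard analytic bookkeeping, the main task being to propagate the parameters $A$, $a$, $b$ through the error estimates uniformly.
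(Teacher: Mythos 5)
Your proposal takes a genuinely different route from the paper, and unfortunately I see real gaps in two places.

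The paper does \emph{not} decompose elements of $\mathcal{B}$ by stripping off the largest prime power. Its engine is Lemma \ref{mainlemma}, which decomposes \emph{every} integer $m\le x$ (not just those in $\mathcal{B}$) uniquely as $m=nr$ with $n\in\mathcal{B}$ and $P^{-}(r)>\theta(n)$, producing the identity $[x]=\sum_{n\le x}\chi(n)\,\Phi(x/n,\theta(n))$. The point is that $\Phi(x/n,\theta(n))$ depends on $n$ only through $x/n$ and $\theta(n)\approx n$, so after Buchstab and Mertens the right-hand side becomes a genuine convolution against $\chi$; Laplace transforms then solve it. The paper explicitly contrasts this with the classical largest-prime-factor decomposition you propose, pointing out that the latter forces one to carry an extra parameter bounding the size of the prime factors. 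That is precisely the obstruction in your Stage 2: in
$$\sum_{\substack{n\in\mathcal{B}\\ 1<n\le x}}\log n=\sum_{m\in\mathcal{B}}\;\sum_{\substack{\alpha\ge 1,\ p^\alpha\le x/m\\ P^{+}(m)<p\le\theta(m)}}\log(mp^\alpha),$$
the inner range depends on $m$ through $P^{+}(m)$ and $\theta(m)$ separately, not only through $x/m$, so the right-hand side is not a convolution in $x/m$ and no clean one-variable renewal equation for $B$ emerges. To proceed this way one would instead have to track a two-variable count of integers of $\mathcal{B}$ with $P^{+}\le y$, which is exactly what Lemma \ref{mainlemma} is engineered to avoid. (Also, the claimed contribution $\min(\theta(m)-m,\,x/m)$ is not what a PNT evaluation of $\sum_{P^{+}(m)<p\le z}\log p$ gives: one gets $z-P^{+}(m)$, and for $\alpha>1$ the cutoff is $(x/m)^{1/\alpha}$, not $x/m$.)

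Stage 1 is also not available for free. What Saias' bounds \eqref{eric} give, via $B(x)\le D(x,f(x))$ with $\theta(n)\le n f(n)$, is $B(x)\ll x\log f(x)/\log 2x$ (this is Lemma \ref{Bestimate}). With $f(x)=A(\log 2x)^a(\log\log 3x)^b$ and $a>0$ this is $x\log\log x/\log x$, not $x/\log x$. The paper only reaches $B(x)\ll x/\log x$ at the end, by a bootstrap: it feeds the crude bound back into the functional equation and iterates $\lceil 1/\eps\rceil$ times. Without a one-variable functional equation in hand, you have no mechanism for that bootstrap, so asserting $B(x)\ll x/\log x$ up front by a ``Rankin sieve along the lines of Saias'' is a real gap. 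Finally, Stage 3's claim that partial summation ``produces'' the errors $(\log x)^{a-1}(\log\log x)^b$ and $(\log\log x)^{b+1}$ is not a step but the entire difficulty; in the paper these errors arise as the tail $h(x)=\int_x^\infty f(y)/(y(\log 2y)^2)\,\mathrm{d}y$ of the kernel in the integral equation, which is only identifiable once that equation has been set up and solved by the Laplace-transform argument.
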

Theorem \ref{thmB} is a consequence of Theorem \ref{thmBh}, which allows for more general upper bounds
on $\theta (n)$ at the expense of more technical conditions.

Theorem \ref{thmP} follows from Theorem \ref{thmB} with $(a,b)=(0,1)$, since $\theta(n)=1+\sigma(n)= O( n \log\log 3n)$.
Thompson's \cite{Thom, Thom2} weakly $\varphi$-practical numbers, which are precisely the integers corresponding to $\theta(n)=n+2$, can be estimated by Theorem \ref{thmB} with $(a,b)=(0,0)$. 

A closely related problem has to do with the ratios of consecutive divisors of an integer, a topic
that goes back at least to Erd\H{o}s \cite[Theorem 3]{Erd} in 1948.
Let $1=d_1(n)<d_2(n)< \ldots < d_{\tau(n)} = n$ denote the increasing sequence of divisors of the integer $n$. 
Tenenbaum \cite{Ten79,Ten86,Ten95} laid the foundation for our understanding of the distribution of the maximum ratio of consecutive divisors. In \cite{Ten79} he showed that, for fixed $\lambda \in [0,1]$, the set of integers $n$ which satisfy
$$
\frac{\max_{1\le i < \tau(n)} \log(d_{i+1}(n) / d_i(n))}{\log n} \le \lambda
$$
has a natural density ($=d(1/\lambda)$ in our notation below).
In \cite[Lemma 2.2]{Ten86} he found that 
\begin{equation*}
   \max_{1\le i < \tau(n)} \frac{d_{i+1}(n)}{d_i(n)}  = \frac{F(n)}{n} \qquad (n\ge  2),
\end{equation*}
where
\begin{equation*}
  F(n):=
  \begin{cases}
    1  & \quad (n=1) \\
    \max\{d\,P^-(d) : d|n, \ d>1\} & \quad (n\ge 2),
  \end{cases}
\end{equation*}
and $P^-(n)$ denotes the smallest prime factor of the integer $n\ge 2$, $P^-(1)=\infty$. 
For any integer 
$n \ge 2$ with  prime factorization $n=p_1^{\alpha_1} \cdots p_k^{\alpha_k}$, $p_1<p_2<\ldots < p_k$, 
the definition of $F(n)$ clearly implies that
\begin{equation*}
  \frac{F(n)}{n} = \frac{ \max\limits_{1\le j \le k} p_j (p_j^{\alpha_j} \cdots p_k^{\alpha_k})}{n}
  =\max_{1\le j \le k}\frac{p_j}{\prod\limits_{1\le i \le j-1} p_i^{\alpha_i}}.
\end{equation*}
Let $D(x,t)$ denote the number of positive integers $n\le x$ whose maximum ratio of consecutive divisors is at most $t$. 
We have
\begin{equation*}
\begin{split}
D(x,t) & = 1 + \#\big\{2\le n\le x : \ \max_{1\le i < \tau(n)} d_{i+1}(n)/d_i(n) \le t \big\}\\
 & = 1 + \#\big\{2\le n\le x : \ F(n)/n \le t\big\} \\
& = 1+ \#\Big\{2\le p_1^{\alpha_1} \cdots p_k^{\alpha_k}\le x : \ 
p_j \le t \prod\limits_{1\le i \le j-1} p_i^{\alpha_i} \quad (1\le j \le k) \Big\}.
\end{split}               
\end{equation*}
The last of these three expressions for $D(x,t)$ will be most useful to us. 
It shows that $D(x,t)=B(x)$ with $\theta(n)=nt$. 
Thus Theorem \ref{thmB} with $(a,b)=(0,0)$ gives an asymptotic estimate for $D(x,t)$ when $t$ is fixed. 
In the following, we allow $t$ to vary with $x$.

Improving on a result by Tenenbaum \cite{Ten86,Ten95}, Saias \cite[Theorem
1]{Saias1} showed that there exist two positive constants $c_3$
and $c_4$, such that\footnote{We replaced $\log x$ by $\log xt$ so that the estimate remains valid when $x<t$.}
\begin{equation}\label{eric}
c_3 \frac{x \log t}{\log xt} \le D(x,t)\le c_4 \frac{x\log t}{\log xt} \qquad
(x\ge 1, \ t\ge 2).
\end{equation}
Let
\begin{equation*}
  v = \frac{\log x}{\log t}.
\end{equation*}

In \cite[Theorem 1]{IDD2} we found that, for $x\ge t\ge \exp\set{(\log\log x)^{5/3+\eps}}$,
\begin{equation}\label{IDD2result}
D(x,t)=x\,d(v)\set{1+O\pts{\frac{1}{\log t}}} 
\end{equation}
where $d(v)$ is given by $d(v)=0$ for $v<0$ and \cite[Lemma 4]{IDD3}
\begin{equation}\label{dinteq}
d(v)= 1-\int_0^{\frac{v-1}{2}} \frac{d(u)}{u+1} \ \omega\left(\frac{v-u}{u+1}\right) \, \mathrm{d} u \qquad (v\ge  0).
\end{equation}
Here $\omega(u)$ denotes Buchstab's function. 
Equation \eqref{dinteq} was used in \cite[Theorem 1]{IDD3} to show that 
\begin{equation}\label{IDD3T1}
d(v)=\frac{C}{v+1}\, \left\{1+ O\left(\frac{1}{(v+1)^{2}}\right)\right\} \qquad (v \ge 0),
\end{equation}
where 
$
\displaystyle C=\frac{1}{1-e^{-\gamma}}=2.280291...,
$
and $\gamma=0.577215...$ is Euler's constant.

Theorem \ref{thmD} improves the error term in \eqref{IDD2result} and removes the lower bound on $t$, giving an asymptotic formula for $D(x,t)$ as $x\to \infty$, uniformly for $t\ge 2$.

\begin{theorem}\label{thmD}
For $x\ge 1$,  $t\ge 2$, we have
\begin{equation*}
D(x,t)=x \, \eta (t) \, d(v)  \left\{1+O\left(\frac{1}{\log 2x}\right)\right\},
\end{equation*}
where
\begin{equation}\label{eta}
0< \eta_0 \le  \eta(t) = 1 + O\left(\frac{1}{\log t}\right)
\end{equation}
for some positive constant $\eta_0$.
\end{theorem}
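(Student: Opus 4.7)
My approach is to view $D(x,t)=B(x)$ for $\theta(n)=nt$, and to strengthen Theorem \ref{thmB}/\ref{thmBh} in two ways simultaneously: (i) make the implied constant in the error term uniform in $t\ge 2$, and (ii) replace the leading term $c_\theta x/\log x$ by the more accurate $x\eta(t)\,d(v)$, which captures the correction of order $\log t/\log xt$ that becomes significant when $v=\log x/\log t$ is not large. A direct appeal to Theorem \ref{thmB} is insufficient: its error depends on $A=t$, and the leading term $c_\theta x/\log x$ is only asymptotically equivalent to $x\eta(t)d(v)$ in the limit $x\to\infty$ with $t$ bounded.

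The core step is a Perron-type analysis of the Dirichlet series $F(s,t)=\sum_{n\in\mathcal{B}}n^{-s}$. The defining condition \eqref{Bdef} with $\theta(n)=nt$ admits a recursive form: $n\in\mathcal{B}$ iff $n=mp^\alpha$ with $m\in\mathcal{B}$, $p>P^+(m)$, and $p\le tm$. This yields a functional identity for $F(s,t)$ from which the behavior near $s=1$ can be extracted with explicit $t$-dependence. The function $d(v)$ should emerge as a scaling limit, with $\eta(t)$ absorbing a local correction factor, and the integral equation \eqref{dinteq} corresponds to the Dirichlet-series functional equation after an appropriate change of variables. The trivial bound $D(x,t)\le x$ handles small $x$ and is responsible for the $\log 2x$ in the denominator of the error.

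For the properties of $\eta(t)$, the asymptotic $\eta(t)=1+O(1/\log t)$ follows by comparison with \eqref{IDD2result} in the range $t\ge\exp\set{(\log\log x)^{5/3+\eps}}$, where both formulas apply simultaneously and the cleaner one forces $\eta(t)$ to be close to $1$ for large $t$. The uniform lower bound $\eta(t)\ge\eta_0>0$ then follows by combining this with Saias's lower bound \eqref{eric}, which prevents $\eta(t)d(v)$ from falling below a constant multiple of $\log t/\log xt$, and hence rules out $\eta(t)$ being small on any compact $t$-range.

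The principal obstacle will be establishing the uniform-in-$t$ error term. Standard Mellin/Perron arguments produce implied constants that degrade with the growth rate of the generating function, which here depends on $t$; extracting a uniform $O(1/\log 2x)$ error will require a careful refinement that exploits the specific multiplicative form $\theta(n)=nt$ rather than treating $\theta$ as a generic function bounded by a constant multiple of $n$. I expect this to be handled by a tailored version of Theorem \ref{thmBh}, coupled with zero-free-region style contour estimates that are uniform in the $t$-parameter, together with a bootstrap that glues the bounded-$t$ analysis to the large-$t$ regime controlled by \eqref{IDD2result}--\eqref{IDD3T1}.
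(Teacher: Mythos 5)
Your proposal has a genuine gap, and the heart of it is the very issue you flag at the end: you acknowledge that the uniform-in-$t$ error term is the principal obstacle, and your proposed remedy (``zero-free-region style contour estimates that are uniform in the $t$-parameter'' for a Dirichlet series $F(s,t)=\sum_{n\in\mathcal{B}}n^{-s}$, plus a bootstrap) is a hope rather than an argument. Worse, the recursive decomposition you invoke ($n=mp^\alpha$ with $m\in\mathcal{B}$, $p>P^+(m)$, $p\le tm$) is precisely the ``count by largest prime factor'' approach that the paper explicitly identifies, in the paragraph preceding Section~2, as the \emph{older} method (Tenenbaum, Saias, and the author's earlier work), whose drawback is that it forces an auxiliary parameter bounding the prime factors. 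So the route you outline would at best recreate the setting in which the uniform estimate had \emph{not} been obtained, and you give no new idea to overcome that.

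The paper's actual mechanism is different and simpler. Lemma~\ref{mainlemma} gives the parameter-free identity $[x]=\sum_{n\le x}\chi(n)\,\Phi(x/n,\theta(n))$, encoding the unique factorization $m=nr$ with $n\in\mathcal{B}$ maximal and $P^-(r)>\theta(n)$. With $\theta(n)=nt$, one inserts the Buchstab estimate for $\Phi$ (Lemma~\ref{Phi}), normalizes via Lemma~\ref{lem2}, passes to an integral equation by partial summation (Lemma~\ref{lem4}), and then substitutes $x=t^{e^z-1}$ to turn it into a \emph{convolution} equation for $G_t(z)=D(x,t)\log(xt)/(x\log t)$. Taking Laplace transforms in $z$, dividing by $1+\widehat{\Omega}(s-1)$ with $\Omega(u)=\omega(e^u-1)$, and inverting yields $D(x,t)=x(\alpha(t)d(v)+C\beta(t)/(v+1))+O(1+x\log t/(\log xt)^2)$ directly, with $\alpha(t)$ and $\beta(t)$ concrete functionals of $D(\cdot,t)$ and the error function. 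No Dirichlet series, no Perron contour, no zero-free region enters. The behavior $\alpha(t)=1+O(1/\log t)$ is then read off by setting $x=t$ and using $D(x,x)=[x]$, and $\beta(t)\ll 1/\log t$ is immediate from the error bound; the lower bound $\eta(t)\ge\eta_0$ follows for bounded $t$ from the formula together with \eqref{eric}, and for large $t$ from $\eta(t)\to 1$. Your sketch of the $\eta(t)$ properties is compatible in spirit with this last part, but it presupposes exactly the uniform main formula your plan has not established.
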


Combining Theorem \ref{thmD} with \eqref{IDD3T1} yields
\begin{corollary}\label{cor1}
For $x\ge t\ge 2$, we have
\begin{equation*}
D(x,t)=\frac{x \, C(t) \log t}{\log xt} \left\{1+O\left(\frac{1}{\log x}+\frac{\log^2 t}{\log^2 x}\right)\right\},
\end{equation*}
where
$$0<C_0 \le C(t):=C \eta(t)= C+ O(1/\log t)$$
for some positive constant $C_0$.
\end{corollary}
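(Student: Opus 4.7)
The plan is to derive the corollary by substituting the asymptotic expansion \eqref{IDD3T1} for $d(v)$ into the conclusion of Theorem~\ref{thmD}, exploiting the hypothesis $x\ge t$ (equivalently $v:=\log x/\log t\ge 1$) to rewrite the various error terms in terms of $\log x$ and $\log t$ alone. Since both input estimates have already been proved, what remains is a short bookkeeping calculation rather than a conceptual argument.

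First, I would start from the formula
\[
D(x,t)=x\,\eta(t)\,d(v)\bigl\{1+O(1/\log 2x)\bigr\}
\]
of Theorem~\ref{thmD}. Because $v\ge 1$, the asymptotic \eqref{IDD3T1} for $d(v)$ applies comfortably; using the elementary identity
\[
v+1=\frac{\log x+\log t}{\log t}=\frac{\log xt}{\log t},
\]
it rewrites as
\[
d(v)=\frac{C\log t}{\log xt}\Bigl\{1+O\bigl(\log^2 t/\log^2 xt\bigr)\Bigr\}.
\]
The hypothesis $x\ge t$ gives $\log x\le \log xt\le 2\log x$, so $\log^2 t/\log^2 xt \asymp \log^2 t/\log^2 x$ with absolute implied constants, and $1/\log 2x \asymp 1/\log x$ for $x\ge 2$. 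Multiplying the two expansions and collecting error terms (the cross term is of size $\log^2 t/\log^3 x$, which is dominated by $1/\log x$ because $\log t\le \log x$) yields the stated estimate with $C(t):=C\eta(t)$.

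The claimed properties of $C(t)$ follow at once from \eqref{eta}: $C(t)=C\eta(t)=C+O(1/\log t)$, and $C(t)\ge C\eta_0=:C_0>0$. Since all the substantive work is contained in Theorem~\ref{thmD} and in the earlier derivation of \eqref{IDD3T1}, there is no genuine obstacle here — the only point requiring a moment's attention is verifying that the three separate error contributions merge into the single expression $1+O\bigl(1/\log x+\log^2 t/\log^2 x\bigr)$, which is settled by the elementary bounds noted above.
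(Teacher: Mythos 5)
Your proposal is correct and is exactly the argument the paper has in mind: the paper offers no separate proof of Corollary~\ref{cor1} beyond the remark ``Combining Theorem~\ref{thmD} with \eqref{IDD3T1} yields,'' and your substitution of $d(v)=\frac{C\log t}{\log xt}\{1+O(\log^2 t/\log^2 xt)\}$ into Theorem~\ref{thmD}, together with the observation that $\log xt\asymp\log x$ when $x\ge t$ and the absorption of the cross term using $\log t\le\log x$, fills in precisely the intended bookkeeping. The deduction of the properties of $C(t)=C\eta(t)$ from \eqref{eta} is likewise immediate and matches the paper.
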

This settles a conjecture expressed below Corollary 1 of \cite{IDD3}.
Corollary \ref{cor1} clearly implies
\begin{corollary}\label{cor2}
For $x\ge t\ge 2$, we have
\begin{equation*}
D(x,t)= \frac{C  x \log t}{\log xt} \set{1+O\pts{\frac{1}{\log t} +\frac{\log^2 t}{\log^2 x}}}.
\end{equation*}
\end{corollary}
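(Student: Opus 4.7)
The strategy is to deduce Corollary~\ref{cor2} directly from Corollary~\ref{cor1} by replacing the variable factor $C(t)$ with the constant $C$ and absorbing the resulting discrepancy into the error term. This is exactly the ``clearly implies'' step announced by the author, so the work is bookkeeping rather than analysis.

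Concretely, I would first use the estimate $C(t)=C+O(1/\log t)$ from Corollary~\ref{cor1} to write
$$
C(t)=C\left(1+O\left(\frac{1}{\log t}\right)\right),
$$
which is legitimate because $C>0$ is an absolute constant. Substituting this into the formula provided by Corollary~\ref{cor1} gives
$$
D(x,t)=\frac{Cx\log t}{\log xt}\left(1+O\left(\frac{1}{\log t}\right)\right)\left\{1+O\left(\frac{1}{\log x}+\frac{\log^2 t}{\log^2 x}\right)\right\}.
$$

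The next step is to expand the product of the two bracketed factors and consolidate the error terms. Under the hypothesis $x\ge t\ge 2$ we have $1/\log x\le 1/\log t$, so the original $O(1/\log x)$ contribution is absorbed into $O(1/\log t)$. The cross-term $(1/\log t)(1/\log x)$ is at most $1/\log^2 t$, hence also absorbed into $O(1/\log t)$. The remaining cross-term $(1/\log t)(\log^2 t/\log^2 x)=\log t/\log^2 x$ is bounded by a constant multiple of $\log^2 t/\log^2 x$ uniformly for $t\ge 2$, because the ratio $\log t/\log^2 t$ is bounded on $[2,\infty)$. Combining these observations shows that the total error fits inside $O(1/\log t+\log^2 t/\log^2 x)$, which is the claimed bound.

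I do not anticipate any obstacle here: Corollary~\ref{cor2} is a weaker form of Corollary~\ref{cor1}, and the derivation is a one-line manipulation of asymptotic error terms. The only point worth verifying is that the hypothesis $x\ge t\ge 2$ is simultaneously what Corollary~\ref{cor1} assumes and what is needed to guarantee $1/\log x\le 1/\log t$, so no additional case analysis arises.
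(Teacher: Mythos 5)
Your proposal is correct and takes exactly the approach the paper intends: the paper simply asserts that Corollary~\ref{cor1} ``clearly implies'' Corollary~\ref{cor2}, and your computation fills in that routine substitution of $C(t)=C\bigl(1+O(1/\log t)\bigr)$ and the absorption of error terms using $x\ge t\ge 2$. The bookkeeping (in particular the treatment of the cross-term $\log t/\log^2 x \ll \log^2 t/\log^2 x$ via $\log t\ge\log 2$) is accurate.
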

This is \cite[Corollary 1]{IDD3}, but without any restriction on $t$.

With the estimate \eqref{eta}, Theorem \ref{thmD} simplifies to
\begin{corollary}\label{cor3}
For $x\ge t\ge 2$, we have
\begin{equation*}
D(x,t)= x\,d(v)\set{1+O\pts{\frac{1}{\log t}}}.
\end{equation*}
\end{corollary}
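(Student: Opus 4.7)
The plan is to deduce Corollary~\ref{cor3} as an immediate bookkeeping consequence of Theorem~\ref{thmD} together with the estimate \eqref{eta}; no new analytic input is required, and the argument is just a matter of multiplying out error terms and using the hypothesis $x\ge t$.

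Starting from Theorem~\ref{thmD},
$$
D(x,t)=x\,\eta(t)\,d(v)\left\{1+O\left(\frac{1}{\log 2x}\right)\right\},
$$
I would substitute $\eta(t)=1+O(1/\log t)$ from \eqref{eta} in place of the prefactor $\eta(t)$. Since \eqref{eta} also asserts that $\eta(t)$ is bounded above (and below by $\eta_0>0$), multiplying out the two correction factors produces a main term equal to $1$ together with error terms of order $1/\log t$, $1/\log 2x$, and $1/(\log t \cdot \log 2x)$, all with implied constants independent of $x$ and $t$.

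The final step is to invoke the hypothesis $x\ge t\ge 2$, which gives $\log 2x \ge \log 2t \ge \log t$, so that $1/\log 2x \le 1/\log t$. Consequently all three error contributions are dominated by a single $O(1/\log t)$ term, which yields
$$
D(x,t)=x\,d(v)\left\{1+O\left(\frac{1}{\log t}\right)\right\}
$$
as desired.

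There is no genuine obstacle here; the only point requiring attention is the observation that the restricted range $x\ge t$ in the corollary (as opposed to the wider range $x\ge 1$ permitted by Theorem~\ref{thmD}) is precisely what ensures $\log 2x \ge \log t$, so that the finer error term $1/\log 2x$ gets swallowed by the coarser $1/\log t$ coming from $\eta(t)$. Without the hypothesis $x\ge t$ the simplified form would fail, as can already be seen in the regime $x\ll t$ where $v<1$ and Theorem~\ref{thmD}'s sharper error term is genuinely stronger.
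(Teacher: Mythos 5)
Your proposal is correct and matches the paper's (implicit) argument: the paper simply states that Theorem~\ref{thmD} ``simplifies to'' Corollary~\ref{cor3} via \eqref{eta}, and your bookkeeping—substituting $\eta(t)=1+O(1/\log t)$ and using $x\ge t$ to absorb the $1/\log 2x$ error into $1/\log t$—is exactly that simplification spelled out.
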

Thus \eqref{IDD2result} holds without the restriction on $t$, confirming 
a speculation expressed below Theorem 1 of \cite{IDD2}.

To summarize, the asymptotic behavior of $D(x,t)$ is revealed in its simplest form 
by Corollary \ref{cor1} when $t$ is fixed, 
by Corollary \ref{cor2} when $t\to \infty$ but $\log t / \log x \to 0$,
and by Corollary \ref{cor3} when $\log t$ and  $\log x$ are of the same order of magnitude.

Let 
\begin{equation*}
\begin{split}
M(x,t) & := \#\Big\{2\le n\le x : \ \max_{1\le i < \tau(n)} d_{i+1}(n)/d_i(n) = t \Big\}\\
 & = \#\big\{2\le n\le x : \ F(n)/n = t\big\} 
\end{split}               
\end{equation*}
and define
\begin{equation*}
S:=\big\{p/m: \  p\  \mbox{prime}, \ m\ge 1, \ F(m)\le p \big\}.
\end{equation*}

\begin{corollary}\label{cor4}
Let $x\ge t\ge 2$. If $t \notin S$, then $M(x,t)=0$. If $t=p/m\in S$, then
\begin{equation*}
M(x,p/m)= \frac{x K(p)}{m\log x} \left\{1+O\left(\frac{p\log p}{\log x}\right)\right\},
\end{equation*}
where 
$$K(p) := (C(p)-C(p-0))\log p \asymp \frac{1}{p}\ ;$$ 
moreover
$$ M(x,p/m) \asymp \frac{x}{pm \log x} \qquad (p^{3+\varepsilon} m\le x). $$
\end{corollary}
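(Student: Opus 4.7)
First I would note that for any $n \ge 2$ with factorization $p_1^{\alpha_1}\cdots p_k^{\alpha_k}$, the ratio $F(n)/n = \max_{j \le k}p_j/\prod_{i<j}p_i^{\alpha_i}$ is attained at a unique index $j_0$; otherwise $p_{j_1}=p_{j_0}\prod_{j_0\le i<j_1}p_i^{\alpha_i}$ would contradict the primality of $p_{j_1}$. Hence $F(n)/n = p/m$ with $p$ prime and $F(m) \le p$, so $F(n)/n \in S$, giving $M(x,t) = 0$ for $t \notin S$. The bound $F(m) \le p$ also forces $mP^-(m) \le p$, so $m \le p/2$, $(m, p) = 1$, and the representation $t = p/m$ is unique. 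I would then set up the bijection $n \mapsto N := n/m$ from $\{n \le x : F(n)/n = p/m\}$ onto $\{N \le x/m : P^-(N) = p,\ F(N)/N \le p\}$: forward, each ratio of $N = p_{j_0}^{\alpha_{j_0}}\cdots p_k^{\alpha_k}$ is $\le p$ by rewriting the bound $F(n)/n = p/m$; backward, since $(m,N)=1$ the ratios of $n = mN$ are obtained by concatenation and are each $\le p/m$, with equality at the prime $p$. Since $P^-(N) = p$ also forces $F(N)/N \ge p$, this gives
\[
M(x, p/m) = M(x/m, p).
\]

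\textbf{Differencing via Theorem~\ref{thmD}.} Next I would set $y = x/m$ and write $M(y, p) = D(y, p) - D(y, p-0)$. Since $v_t = \log y/\log t$ and, via \eqref{dinteq}--\eqref{IDD3T1}, $d(v)$ are continuous in $t$ at $t = p$, applying Theorem~\ref{thmD} to each term yields
\[
M(y, p) = y\bigl(\eta(p) - \eta(p-0)\bigr) d(v_p) + O\!\left(\frac{y\,d(v_p)}{\log 2y}\right).
\]
The definitions of $K(p)$ and $C(t) = C \eta(t)$ give $\eta(p) - \eta(p-0) = K(p)/(C \log p)$, and \eqref{IDD3T1} gives $d(v_p) = C\log p/\log(yp)\cdot\{1 + O(\log^2 p/\log^2 y)\}$ for $y \ge p$, so the main term collapses to $yK(p)/\log(yp)$. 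Setting $y = x/m$, using $m \le p$ to absorb $\log(xp/m) = \log x\{1 + O(\log p/\log x)\}$, and invoking the bound $K(p) \asymp 1/p$ established below to recast the absolute error $O(y\log p/\log^2 y)$ as a relative error $O(p\log p/\log x)$, produces the claimed asymptotic.

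\textbf{Orders of magnitude.} For $K(p) \asymp 1/p$ and the $\asymp$-bound, I would decompose $N = p^\alpha n'$ with $\alpha \ge 1$ and $P^-(n') > p$, so $M(y, p) = \sum_{\alpha \ge 1}\Phi_\alpha(y)$ with $\Phi_\alpha(y) = \#\{n' \le y/p^\alpha : P^-(n') > p,\ F(n')/n' \le p^{\alpha+1}\}$. The Mertens bound $\Phi_\alpha(y) \ll y/(p^\alpha \log p)$ summed over $\alpha$ gives $M(y, p) \ll y/(p\log p)$; inserting this into the asymptotic above (as $y \to \infty$ with $p$ fixed) forces $K(p) \ll 1/p$. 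Conversely, the family $N = pq$ with primes $q \in (p, p^2]$ gives $M(y, p) \ge \pi(p^2) - \pi(p) \gg p^2/\log p$ for $y \ge p^3$, forcing $K(p) \gg 1/p$. For the final $\asymp$-bound on $p^{3+\varepsilon}m \le x$ (where the error $p\log p/\log x$ need not be $o(1)$), the upper bound is the same Mertens sieve applied at $y = x/m$. For the lower bound, Theorem~\ref{thmD} gives $D(y/p, p^2) \asymp y \log p/(p \log y)$, and the fundamental lemma of the sieve (applicable since $p \le y^{1/(3+\varepsilon)}$) restricts to $P^-(n') > p$ while losing a factor $\asymp 1/\log p$, furnishing $\Phi_1(y) \gg y/(p \log y)$ and hence $M(x, p/m) \gg x/(pm \log x)$.

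\textbf{The main obstacle.} The delicate step will be the differencing: a single application of Theorem~\ref{thmD} carries an absolute error of size $\asymp y\log p/\log^2 y$, which already dwarfs the jump $M(y, p) \asymp y/(p\log y)$ whenever $p\log p \gg \log y$. The argument survives only because $d(v)$ is continuous across $t = p$, so the two error terms carry matching $d$-factors and differ only through their $\eta$-factors (which sum to $O(1)$); even so, the resulting relative error $O(p\log p/\log x)$ is $o(1)$ only when $p \ll \log x/\log\log x$, which is precisely why the $\asymp$-estimate on the wider range $p^{3+\varepsilon}m \le x$ has to be established by a separate direct sieve argument rather than by specialising the asymptotic.
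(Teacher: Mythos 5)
Your reduction to $M(x/m,p)$ and the differencing step via Theorem~\ref{thmD} match the paper's argument closely; the bijection $n\mapsto n/m$ is set up a little differently from the paper's $n=mpr$ parametrization but is equivalent, and the observation that $C(p-0)$ (equivalently $\eta(p-0)$) exists needs a one-line Cauchy argument, which the paper supplies and you elide. That part is essentially sound.

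The part that fails is the order-of-magnitude argument. You claim the Mertens sieve gives $M(y,p)\ll y/(p\log p)$ after dropping the condition $F(n')/n'\le p^{\alpha+1}$, and that is correct, but it is genuinely weaker than what is needed, which is $M(y,p)\ll y/(p\log y)$. The two differ by a factor $\log y/\log p$, which is unbounded: at the scale $\log y\asymp p\log p$ where the error term $O(p\log p/\log y)$ in your asymptotic first becomes $O(1)$, the Mertens bound is off by a factor $\asymp p$, so inserting it into $M(y,p)=yK(p)/\log(yp)+O(y\log p/\log^2 y)$ yields only $K(p)\ll 1$, not $K(p)\ll 1/p$. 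The point is that you cannot throw away the constraint $F(n')/n'\le p^{\alpha+1}$; it is precisely what depresses the count from $\asymp y/(p\log p)$ to $\asymp y/(p\log y)$. Your lower bound has the mirror problem: the family $N=pq$, $q\in(p,p^2]$, gives $M(y,p)\ge\pi(p^2)-\pi(p)\gg p^2/\log p$ for $y\ge p^3$, but this lower bound does not grow with $y$, whereas the main term $yK(p)/\log(yp)$ does; at $y=p^3$ the error term $O(y\log p/\log^2 y)\asymp p^3/\log p$ already swamps $p^2/\log p$, and for $y$ large the inequality $yK(p)/\log y\gg p^2/\log p$ degenerates to nothing. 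Finally, the appeal to ``the fundamental lemma of the sieve'' to extract $P^-(n')>p$ from the set $\{n'\le y/p: F(n')/n'\le p^2\}$ is not a standard move: the set being sieved is defined by a divisor-ratio constraint, not an arithmetic-progression or interval structure, and no level-of-distribution input is available off the shelf. The paper avoids all of this by citing Saias \cite[Th\'eor\`eme~1]{Saias2} together with \cite[Remark~2]{IDD1}, which give directly $\#\{r\le z: P^-(r)\ge p,\ F(r)/r\le p^2\}\asymp z/\log z$ uniformly for $p\le z^{1/(3+\varepsilon)}$, $p\ge p_0$, supplemented by an iteration of the functional equation for the finitely many $p<p_0$. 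Without that input, or a genuine substitute for it, the two $\asymp$-claims in the corollary are not established by your argument.
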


The notation $K(p) \asymp 1/p $ means that there exist two positive constants $c_1$, $c_2$,
such that $c_1 /p \le K(p) \le c_2/p$ for all primes $p$.

We will derive Corollary \ref{cor4} from Corollary \ref{cor1} in Section \ref{SectionM}.
Corollary \ref{cor4} shows that $C(t)$ is discontinuous at every $t\in S$.
Note that $S$ contains all rational numbers of the
form $p/2^j$ where $j\ge 0$ and $p$ is a prime with $p\ge 2^{j+1}$. Hence $S$
is dense in $[2,\infty)$ by the prime number theorem.

The main tool for proving Theorems \ref{thmB} and \ref{thmD} is the functional equation in Lemma \ref{mainlemma}, 
a special case of which has already been used in \cite{IDD3} to establish \eqref{IDD3T1}. Tenenbaum \cite{Ten86, Ten95}, Saias \cite{Saias1, Saias2} and the author \cite{IDD1, IDD2} have previously employed functional equations that correspond to counting the integers in question according to their largest (or smallest) prime factor, an approach which requires an additional parameter to limit the size of the prime factors. The main advantage of Lemma \ref{mainlemma} is that it does not involve any extra parameters.

\section{Preliminary Lemmas}

Let
$$ \Phi(x,y) = \# \{ n\le x : P^-(n)>y \}  .$$
For $u\ge 1$, Buchstab's function $\omega(u)$ is defined as the
unique continuous solution to the equation
\begin{equation*}
(u\omega(u))' = \omega(u-1) \qquad (u>2)
\end{equation*}
with initial condition
\begin{equation*}
u\omega(u)=1 \qquad (1\le u \le 2).
\end{equation*}
Let $\omega(u)=0$ for $u<1$ and define $\omega$ at 1 and $\omega'$ at
1 and 2 by right-continuity.

\begin{lemma}\label{omega}
We have
\begin{enumerate}[(i)]
\item $|\omega'(u)|\le 1/\Gamma(u+1) \quad (u\ge 0)$,
\item $|\omega(u)-e^{-\gamma}| \le 1/\Gamma(u+1) \quad (u\ge 0)$.
\end{enumerate}
\end{lemma}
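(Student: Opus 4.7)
My plan is to derive both bounds from the functional equation $(u\omega(u))' = \omega(u-1)$ together with the classical fact that $\omega(u) \to e^{-\gamma}$ as $u\to\infty$. Two reformulations of the equation will do the work. Differentiating gives
\[
\omega'(u) = \frac{\omega(u-1)-\omega(u)}{u} \qquad (u>2),
\]
and integrating from $u$ to $\infty$ (after subtracting $e^{-\gamma}$ and using $u(\omega(u)-e^{-\gamma})\to 0$) gives
\[
u\bigl(\omega(u)-e^{-\gamma}\bigr) = -\int_u^\infty \bigl(\omega(s-1)-e^{-\gamma}\bigr)\,ds \qquad (u\ge 1).
\]

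For (ii) I would use a maximum-principle argument rather than a plain induction. Set $f(u) := |\omega(u)-e^{-\gamma}|\,\Gamma(u+1)$. Since $\omega(u)\to e^{-\gamma}$ rapidly, $f$ is bounded on $[1,\infty)$ and tends to $0$, so $M:=\sup_{u\ge 1}f(u)$ is attained at some $u^*$. The integral identity gives $f(u)\le \Gamma(u)\int_u^\infty f(s-1)/\Gamma(s)\,ds \le M \cdot \Gamma(u)\int_u^\infty ds/\Gamma(s)$. By Laplace's method (since $\log\Gamma(s)$ has derivative $\psi(s)\sim\log s$), one has $\Gamma(u)\int_u^\infty ds/\Gamma(s) \sim 1/\log u$, so in particular this quantity is $\le r < 1$ for $u\ge u_0$ with some explicit $u_0$. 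Hence $u^*\le u_0$, and it remains to verify $f(u)\le 1$ on the bounded range $[1,u_0]$ by direct calculation using the explicit piecewise-elementary form of $\omega$ (namely $\omega(u)=1/u$ on $[1,2]$, $\omega(u)=(1+\log(u-1))/u$ on $[2,3]$, and so on, obtained by iterating the recursion). Once (ii) holds, bound (i) follows from the derivative identity rewritten as $\omega'(u) = ((\omega(u-1)-e^{-\gamma})-(\omega(u)-e^{-\gamma}))/u$; the naive substitution yields $|\omega'(u)|\le (1/\Gamma(u)+1/\Gamma(u+1))/u$, which slightly exceeds $1/\Gamma(u+1)$ for small $u$, so I would apply the same maximum-principle argument to $g(u):=|\omega'(u)|\,\Gamma(u+1)$, combined with an analogous integral formula for $\omega'$ derived by differentiating the identity above.

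The main obstacle will be the direct verification of $f\le 1$ and $g\le 1$ on the bounded initial range $[1,u_0]$: although $\omega$ and $\omega'$ are explicit there, $\omega(u)-e^{-\gamma}$ oscillates in sign, so one must monitor its extrema on each piece. For the first few intervals this reduces to checking a finite list of elementary inequalities between rational-plus-log expressions and reciprocals of the Gamma function, using the endpoint values and piecewise monotonicity; the slack grows rapidly with $u$ because $|\omega(u)-e^{-\gamma}|$ decays super-exponentially while $\Gamma(u+1)$ grows only factorially, so that $f$ is already comfortably below $1$ by the second or third interval.
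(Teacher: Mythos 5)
The paper's own ``proof'' of this lemma is a pure citation: part (i) to Tenenbaum's book [Theorems III.5.5, III.6.4] and part (ii) to Lemma 1(iii) of \cite{IDD3}. You are attempting a genuine proof from the defining recursion, which is a different (and more ambitious) route. Your key integral identity
\[
u\bigl(\omega(u)-e^{-\gamma}\bigr) = -\int_u^\infty\bigl(\omega(s-1)-e^{-\gamma}\bigr)\,\mathrm{d}s \qquad (u\ge 1)
\]
is correct, as is the Laplace-method estimate $\Gamma(u)\int_u^\infty \mathrm{d}s/\Gamma(s)\sim 1/\log u$. However, there is a genuine circularity at the heart of the maximum-principle step. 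You assert that ``since $\omega(u)\to e^{-\gamma}$ rapidly, $f$ is bounded on $[1,\infty)$,'' where $f(u)=|\omega(u)-e^{-\gamma}|\Gamma(u+1)$. The mere convergence $\omega(u)\to e^{-\gamma}$ gives nothing here: boundedness of $f$ is, up to a constant, exactly the statement of part (ii). Even the absolute convergence of the integral defining the identity requires a quantitative decay rate for $\omega(s)-e^{-\gamma}$ that is of the same strength as what you are proving. A finite-interval bootstrap does not rescue the argument either, because the integral in your inequality $f(u)\le \Gamma(u)\int_u^\infty f(s-1)\,\mathrm{d}s/\Gamma(s)$ looks arbitrarily far forward, so an induction on $T$ for $\sup_{[1,T]}f$ cannot close.

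There is a natural repair that also fixes a second soft spot. Your phrase ``an analogous integral formula for $\omega'$ derived by differentiating the identity above'' produces nothing new: differentiating your integral identity simply returns the original functional equation. Instead, prove (i) first using the strictly local recursion obtained from $u\omega'(u)=\omega(u-1)-\omega(u)=-\int_{u-1}^u\omega'(s)\,\mathrm{d}s$, which gives
\[
g(u):=|\omega'(u)|\,\Gamma(u+1)\ \le\ \Gamma(u)\int_{u-1}^u\frac{g(s)}{\Gamma(s+1)}\,\mathrm{d}s,
\]
where the kernel $\Gamma(u)\int_{u-1}^u \mathrm{d}s/\Gamma(s+1)$ is eventually $<1$. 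Because this recursion looks back only one unit, one can run a genuine interval-by-interval contraction argument (no a priori global bound on $g$ is needed, only boundedness of the continuous function $g$ on a finite initial segment, which is automatic). Once (i) is established, part (ii) follows by integrating $\omega(u)-e^{-\gamma}=-\int_u^\infty\omega'(s)\,\mathrm{d}s$, whose convergence is now justified by (i) itself; the bound $\int_u^\infty \mathrm{d}s/\Gamma(s+1)\le 1/\Gamma(u+1)$ for $u$ large finishes the matter, with a finite check on an initial range. That finite check does have to be done carefully --- note that (i) actually holds with equality at $u=1^{+}$ where $|\omega'|=1=1/\Gamma(2)$, so there is no slack there --- but it is a tractable computation, as you correctly anticipate.
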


\begin{proof}
Part (i) is a consequence of Tenenbaum \cite[Theorems III.5.5, III.6.4]{Ten}. 
Part (ii) is \cite[Lemma 1 (iii)]{IDD3}.
\end{proof}

\begin{lemma}\label{Phi}
Let $u=\frac{\log x}{\log y}$. For $x\ge 1$, $y\ge 2$, we have
$$ \Phi(x,y)= e^\gamma x \omega(u) \prod_{p\le y} \left(1-\frac{1}{p}\right) +
O\left(\frac{y}{\log y} + \frac{x e^{-u/3}}{(\log y)^2}\right).
$$
\end{lemma}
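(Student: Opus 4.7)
The plan is the classical Buchstab-induction proof. Set $W(y) := \prod_{p \le y}(1 - 1/p)$ and $u := \log x/\log y$, and induct on $u$.

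For the base case $u \le 2$ (equivalently $y \ge \sqrt{x}$), every $n \le x$ with $P^-(n) > y$ is either $n=1$ or a prime in $(y,x]$, so $\Phi(x,y) = 1 + \pi(x) - \pi(y)$. Combining the prime number theorem with classical error, Mertens' formula $W(y) = e^{-\gamma}(\log y)^{-1}\{1 + O(1/\log y)\}$, and the closed form $\omega(u) = 1/u$ on $[1,2]$, I identify this with $e^\gamma x \omega(u) W(y)$ up to the claimed error. The range $y > x$ is trivial: $\Phi(x,y) = 1 \ll y/\log y$ while $\omega(u) = 0$.

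For the inductive step $u > 2$, I would apply Buchstab's identity
\begin{equation*}
\Phi(x,y) = \Phi(x, \sqrt{x}) + \sum_{y < p \le \sqrt{x}} \Phi(x/p, p-1),
\end{equation*}
handling $\Phi(x, \sqrt{x})$ by the base case and feeding the induction hypothesis into each $\Phi(x/p, p-1)$; its $u$-value $(\log x - \log p)/\log(p-1)$ is strictly less than $u$, so the induction is well-founded. Converting the sum over primes into a Stieltjes integral against $\pi(t)$ via PNT and changing variables $s = (\log x - \log t)/\log t$ produces an expression whose main term integrates, via Buchstab's defining relation $(s\omega(s))' = \omega(s-1)$, to exactly $e^\gamma x \omega(u) W(y)$.

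The main obstacle is propagating the exponentially decaying error $O(xe^{-u/3}/\log^2 y)$ through the recursion without accumulating a spurious factor of $u$. Closing the induction requires bounding a convolution of the shape $\int_0^{u-1} e^{-(u-s)/3}\omega(s-1)/(s+1)\, \mathrm{d}s$ by a constant times $e^{-u/3}$; a naive estimate would lose a factor of $u$. The rapid decay of $\omega$ from Lemma \ref{omega}(ii), together with the smoothness bound in Lemma \ref{omega}(i), is what makes this estimate quantitative, and the exponent $1/3$ is a natural outcome of the tradeoff (it is certainly not tight). The additive error $O(y/\log y)$ originates from the $\pi(y)$ term in the base case and from the cumulative PNT error when summed over primes in the Buchstab expansion.
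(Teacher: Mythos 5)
Your approach is genuinely different from the paper's: the paper does not prove this estimate from scratch at all, but simply cites Tenenbaum's textbook, namely \cite[Corollary III.6.7.6]{Ten}, for the range $x\ge 2y\ge 5$, and then checks directly that the $O(y/\log y)$ term absorbs everything in the complementary boundary ranges $2\le y\le 5/2$ and $y>x/2$. You, by contrast, sketch a full rederivation via Buchstab iteration. Reinventing a textbook theorem is legitimate in principle, but it raises the bar: you must actually be able to close the induction, and that is precisely where your sketch breaks down.

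The problem is the central error-propagation estimate. You claim that
$$\int_0^{u-1} e^{-(u-s)/3}\,\frac{\omega(s-1)}{s+1}\, \mathrm{d}s \ll e^{-u/3},$$
attributing the rescue to the ``rapid decay of $\omega$ from Lemma \ref{omega}(ii).'' But Lemma \ref{omega}(ii) says $|\omega(u)-e^{-\gamma}|\le 1/\Gamma(u+1)$: the Buchstab function converges rapidly to the positive constant $e^{-\gamma}$, it does not tend to $0$. Substituting $r=u-s$, the integral becomes $\int_1^{u}e^{-r/3}\,\omega(u-r-1)/(u-r+1)\,\mathrm{d}r$; the exponential concentrates mass near $r=1$, where $\omega(u-2)\approx e^{-\gamma}$ and $1/(u-r+1)\approx 1/u$, so the integral is in fact $\asymp 1/u$, not $\ll e^{-u/3}$. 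Thus the claimed bound is false, and the induction on the error term $O(xe^{-u/3}/(\log y)^2)$ does not close by the mechanism you describe. Concretely, feeding the inductive error $\ll (x/p)e^{-u_p/3}/(\log p)^2$ into the sum over $y<p\le\sqrt{x}$ and estimating via the prime number theorem gives a contribution of order $x/(\log x)^2$ uniformly in $y$, which for small $y$ and large $u$ is much larger than the target $xe^{-u/3}/(\log y)^2 = xu^2e^{-u/3}/(\log x)^2$. Getting the exponential saving in $u$ genuinely requires the saddle-point/zero-free-region machinery developed in Tenenbaum's Chapter III.6, not a naive Buchstab bootstrap; this is exactly why the paper elects to cite that result rather than rederive it.

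Your base case $u\le 2$ is essentially fine (with the small caveat that one should note $y>x$ forces $\omega(u)=0$ and $\Phi(x,y)=1$, which you do), and the Buchstab identity you write with $\Phi(x/p,p-1)$ is the correct one. But as it stands, the inductive step has a real gap, and the misreading of Lemma \ref{omega}(ii) is at its root.
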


\begin{proof}
If $x\ge 2y\ge 5$, the result follows from Tenenbaum \cite[Corollary III.6.7.6]{Ten}. If $2\le y \le 5/2$ or $y>x/2$, it is easy to verify
that the error term $O(y/\log y)$ is adequate.
\end{proof}

Let $\chi(n)$ be the characteristic function of the set $\mathcal{B}$ described in \eqref{Bdef}.
Let $P^+(n)$ denote the largest prime factor of the integer $n\ge 2$, and put $P^+(1)=1$.
The main tool of this paper is the following functional equation, which generalizes \cite[Lemma 2]{IDD3}.

\begin{lemma}\label{mainlemma}
Let $\theta(n)$ be a real-valued arithmetic function with $\theta(n)\ge P^+(n)$.
For $x\ge 0$, we have
\begin{equation*}
[x]=\sum_{n\le x} \chi(n) \, \Phi(x/n, \theta(n)) .
\end{equation*}
\end{lemma}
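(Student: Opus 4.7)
The plan is to show that both sides count the positive integers in $[1,x]$, once each. Swapping the order of summation on the right-hand side gives
\[
\sum_{n\le x} \chi(n)\,\Phi(x/n,\theta(n))
= \#\big\{(n,m): nm\le x,\ n\in\mathcal{B},\ P^-(m)>\theta(n)\big\},
\]
so the problem reduces to showing that every integer $N$ with $1\le N\le x$ admits exactly one factorization $N=nm$ with $n\in\mathcal{B}$ and $P^-(m)>\theta(n)$.

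For existence, I would write $N = q_1^{\beta_1}\cdots q_r^{\beta_r}$ with $q_1<\cdots<q_r$, introduce the partial products $n_j := q_1^{\beta_1}\cdots q_j^{\beta_j}$ (so that $n_0=1\in\mathcal{B}$), and take $j^\ast$ to be the \emph{largest} index in $\{0,1,\ldots,r\}$ for which $n_{j^\ast}\in\mathcal{B}$. Setting $n:=n_{j^\ast}$ and $m:=N/n$, the required inequality on $P^-(m)$ then follows: if $j^\ast=r$ then $m=1$ and $P^-(m)=\infty$; otherwise, $q_{j^\ast+1}\le\theta(n_{j^\ast})$ would, via \eqref{Bdef}, place $n_{j^\ast+1}$ in $\mathcal{B}$, contradicting the choice of $j^\ast$, so $P^-(m)=q_{j^\ast+1}>\theta(n)$.

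For uniqueness, the hypothesis $\theta(n)\ge P^+(n)$ becomes essential. Any valid pair $(n,m)$ satisfies $P^-(m)>\theta(n)\ge P^+(n)$, so every prime of $n$ lies strictly below every prime of $m$, and the factorization must take the shape $n=n_j$, $m=N/n_j$ for some $j$. If two indices $j_1<j_2$ both worked, then $n_{j_2}\in\mathcal{B}$ would imply via \eqref{Bdef} at step $j_1+1$ that $q_{j_1+1}\le\theta(n_{j_1})$, directly contradicting $P^-(N/n_{j_1})=q_{j_1+1}>\theta(n_{j_1})$.

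I do not anticipate a serious technical obstacle; the argument is a clean combinatorial bijection that generalizes \cite[Lemma 2]{IDD3}. The one delicate point is recognizing that $j^\ast$ must be chosen \emph{maximally}, since this is exactly what synchronizes the $\mathcal{B}$-condition on $n$ with the strict $P^-$-condition on $m$ at the cut point, and the assumption $\theta(n)\ge P^+(n)$ is what guarantees the cut respects the prime factorization of $N$.
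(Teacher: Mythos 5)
Your proof is correct and matches the paper's argument essentially verbatim: both establish a bijection by taking $n$ to be the maximal initial segment of the prime factorization of $N$ lying in $\mathcal{B}$, and both use $\theta(n)\ge P^+(n)$ to force any valid factorization to respect the prime ordering, which pins down uniqueness. The only difference is cosmetic—you spell out the contradiction with \eqref{Bdef} a bit more explicitly than the paper does.
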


\begin{proof}
We show that every positive integer $m\le x$ can be written uniquely as $m=nr$, 
where $n \in \mathcal{B}$ and $P^{-}(r)>\theta(n)$. If $m=1$, we have $n=1\in \mathcal{B}$
and $P^{-}(r)=P^{-}(1)=\infty > \theta(1)$.
If $2\le m \le x$, we write $m=p_1^{\alpha_1} p_2^{\alpha_2}\cdots p_k^{\alpha_k}$, where $p_1<p_2<\ldots < p_k$.
Define $n$ to be the largest possible divisor of $m$ of the form
\begin{equation}\label{nform}
n= \prod_{1\le i \le j} p_i^{\alpha_i}   \qquad (0\le j \le k)
\end{equation}
such that $n\in \mathcal{B}$ and let $ r=m/n$.
Since $n$ is maximal, $p_{j+1}= P^{-}(r)>\theta(n)$ when $n<m$. If $n=m$, $r=1$ and $P^{-}(r)=\infty> \theta(n)$. This shows that we can write every $m\le x$ as $m=nr$ with $n \in \mathcal{B}$ and $P^{-}(r)>\theta(n)$. 

The uniqueness of the pair $(n,r)$ follows from $P^{-}(r)>\theta(n) \ge P^+(n)$,
which implies that $n$ must be of the form \eqref{nform}. Also, $n$ must be the largest divisor of $m$ of the form \eqref{nform} with $n\in \mathcal{B}$ or else $p_{j+1}\le \theta(n)$.
\end{proof}

\section{Proof of Theorem \ref{thmD}}

Throughout the rest of the paper, we write `$f(x)\ll g(x)$ for $x \in A$' or `$f(x) = O(g(x))$ for $x\in A$' to mean 
that there is a constant $c$ such that $|f(x)| \le c|g(x)|$ for all $x \in A$.
We write `$f(x) \asymp g(x)$' to mean that $f(x)\ll g(x)$ and $g(x)\ll  f(x)$.
 
Let
\begin{equation*}
\chi_t(n)= 
\begin{cases} 1 & \text{if $F(n)\le nt$,}
\\
0 &\text{else.}
\end{cases}
\end{equation*}

\begin{lemma}\label{lem0}
For $x\ge 0$, $t\ge 1$, we have
\begin{equation*}
D(x,t)=D(\sqrt{x/t},t)+[x]-\sum_{n\le \sqrt{x/t}} \chi_t(n) \, \Phi(x/n, nt) .
\end{equation*}
\end{lemma}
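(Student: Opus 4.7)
The plan is to apply Lemma \ref{mainlemma} with $\theta(n)=nt$ and split the resulting sum at the threshold $n=\sqrt{x/t}$. For this choice of $\theta$, condition \eqref{Bdef} becomes $p_j \le t\prod_{1\le i \le j-1} p_i^{\alpha_i}$, which is exactly the condition $F(n)/n\le t$; hence the characteristic function $\chi$ of the associated set $\mathcal{B}$ coincides with $\chi_t$. Since $\theta(n)=nt \ge n \ge P^+(n)$ for $t\ge 1$, Lemma \ref{mainlemma} applies and yields
$$[x]=\sum_{n\le x}\chi_t(n)\,\Phi(x/n,nt).$$

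The key observation is that the summand collapses to $\chi_t(n)$ once $n>\sqrt{x/t}$. Indeed, for such $n$ we have $x/n<nt$, and whenever $0\le y<z$ the only positive integer $m\le y$ with $P^-(m)>z$ is $m=1$, since any $m\ge 2$ satisfies $P^-(m)\le m\le y<z$. Thus $\Phi(y,z)=1$ in that range, and therefore
$$\sum_{\sqrt{x/t}<n\le x}\chi_t(n)\,\Phi(x/n,nt)=\sum_{\sqrt{x/t}<n\le x}\chi_t(n)=D(x,t)-D(\sqrt{x/t},t).$$
Rearranging the split identity gives the stated formula. There is no real obstacle here: the argument is a direct specialization of Lemma \ref{mainlemma} together with the trivial evaluation of $\Phi(y,z)$ for $y<z$. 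The only minor point to verify is the boundary case $n>x$, where $\Phi(x/n,nt)=0$ since $x/n<1$, so the sum effectively terminates at $n=x$ and contributes nothing beyond what is captured by $D(x,t)$.
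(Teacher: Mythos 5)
Your proof is correct and follows the same approach as the paper: specialize Lemma \ref{mainlemma} to $\theta(n)=nt$, observe that $\Phi(x/n,nt)=1$ for $\sqrt{x/t}<n\le x$ (since then $1\le x/n<nt$), and rearrange. The only cosmetic slip is the claim that $\Phi(y,z)=1$ whenever $0\le y<z$ — in fact $\Phi(y,z)=0$ for $0\le y<1$ — but this does not affect the argument, since the range $\sqrt{x/t}<n\le x$ guarantees $x/n\ge 1$.
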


\begin{proof}
This follows from Lemma \ref{mainlemma} with $\theta(n)=nt$ since $\Phi(x/n, nt)=1$ when $\sqrt{x/t}<n \le x$.
\end{proof}

\begin{lemma}\label{lem1}
For $x\ge 1$, $t \ge 2$, we have
\begin{multline*}
D(x,t) = \\
x\ -\ x\sum_{n\le \sqrt{x/t}} \frac{\chi_t(n)}{n} \, e^\gamma \omega\left(\frac{\log x/n}{\log nt}\right) 
\prod_{p\le nt} \left(1-\frac{1}{p}\right) + O\left(1+\frac{x \log t}{(\log xt)^2}\right).
\end{multline*}
\end{lemma}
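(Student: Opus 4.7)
The plan is to start from the identity of Lemma~\ref{lem0}, replace $[x]$ by $x + O(1)$, and invoke Saias's bound \eqref{eric} to absorb $D(\sqrt{x/t}, t)$ into the error term. When $\sqrt{x/t}\ge 1$, Saias gives $D(\sqrt{x/t}, t)\ll \sqrt{x/t}\log t/\log\sqrt{xt}$; since $\log\sqrt{xt}/\sqrt{xt} = O(1)$ for $xt\ge 2$, this rearranges to $O(x\log t/(\log xt)^2)$, and for $\sqrt{x/t}<1$ the quantity vanishes. This reduces the identity to
\begin{equation*}
D(x,t) = x - \sum_{n\le\sqrt{x/t}}\chi_t(n)\Phi(x/n, nt) + O\left(1 + \frac{x\log t}{(\log xt)^2}\right).
\end{equation*}

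Next, for each $n\le\sqrt{x/t}$ I apply Lemma~\ref{Phi} with parameters $x/n\ge\sqrt{xt}$ and $nt\ge t\ge 2$, writing $u_n := \log(x/n)/\log(nt)\ge 1$. The main term $e^\gamma(x/n)\omega(u_n)\prod_{p\le nt}(1-1/p)$, summed against $\chi_t(n)$, reproduces the sum in the lemma statement. It remains to bound the two error contributions
\begin{equation*}
E_1 := \sum_{n\le\sqrt{x/t}}\chi_t(n)\frac{nt}{\log nt}, \qquad E_2 := \sum_{n\le\sqrt{x/t}}\chi_t(n)\frac{(x/n)e^{-u_n/3}}{(\log nt)^2}
\end{equation*}
by $O(x\log t/(\log xt)^2)$.

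I plan to handle both by a dyadic decomposition combined with Saias's upper bound $D(y,t)\ll y\log t/\log(yt)$. Setting $L := \log xt$ and partitioning the range by $\log(nt)\in[k,k+1]$ for integer $k\in[\log t, L/2]$, the count of $n$ in the $k$-th block with $\chi_t(n)=1$ is $\ll (e^k/t)\log t/k$; within such a block $\log(nt)\asymp k$, $x/n\asymp e^{L-k}$, and $u_n\asymp L/k-1$. The block contribution to $E_1$ is $\ll e^{2k}\log t/(tk^2)$, a geometric-type sequence dominated by its top term at $k\asymp L/2$, giving $E_1\ll x\log t/L^2$. The block contribution to $E_2$ is $\ll x\log t\, e^{-L/(3k)}/k^3$; the substitution $s = L/(3k)$ converts $\sum_k e^{-L/(3k)}/k^3$ into $(1/L^2)\int_{2/3}^{L/(3\log t)} s e^{-s}\,ds = O(1/L^2)$, so $E_2\ll x\log t/L^2$ as well. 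The main obstacle is the bound for $E_2$: using only $u_n\ge 1$ gives $e^{-u_n/3}=O(1)$ and loses a factor of roughly $\log xt/\log t$, producing the inadequate $O(x/\log xt)$. The correct estimate forces one to exploit the exponential decay of $e^{-u_n/3}$ jointly with the sparsity $\ll\log t/\log(nt)$ of $t$-dense integers, which the dyadic framework encodes cleanly; the peak of $e^{-L/(3k)}/k^3$ at $k\asymp L/9$ is what delivers the needed factor $1/L^2$ after integration.
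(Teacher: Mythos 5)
Your proposal is correct and takes essentially the same route as the paper: reduce via Lemma~\ref{lem0}, absorb $D(\sqrt{x/t},t)$ using \eqref{eric}, invoke Lemma~\ref{Phi}, then bound the $O(y/\log y)$ and $O(xe^{-u/3}/(\log y)^2)$ contributions with a dyadic-type decomposition combined with the Saias density $\ll \log t/\log(nt)$. The paper's proof is terser (it simply says ``split the interval by powers of $2$ and use~\eqref{eric}''), while you spell out the block counting and the substitution $s=L/(3k)$ that yields $\sum_k e^{-L/(3k)}/k^3 \ll 1/L^2$; the substance is the same.
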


\begin{proof}
If $x<t$, the sum is empty and $D(x,t)=[x]=x+O(1)$. 
If $x\ge t$, we apply Lemma \ref{Phi} to estimate each occurrence of $\Phi(x/n, nt)$ in Lemma \ref{lem0}. 
The contribution from the error term $O(y/\log y)$ is
\begin{equation*}
\ll \sum_{n\le \sqrt{x/t}} \chi_t(n) \frac{nt}{\log nt}
 \ll \frac{\sqrt{xt}}{\log\sqrt{xt}} \sum_{n\le \sqrt{x/t}} \chi_t(n) 
\ll \frac{x \log t}{(\log xt)^2},
\end{equation*}
by \eqref{eric}. For the contribution from $O\left( \frac{xe^{-u/3}}{(\log y)^2}\right)$,
we can split up the interval $[1,\sqrt{x/t}]$ by powers of $2$ and use \eqref{eric} to get
\begin{equation*}
\begin{split}
 & \ll \sum_{n\le \sqrt{x/t}} \chi_t(n) \frac{x}{n (\log nt)^2} \exp\left(-\frac{\log x/n}{3\log nt}\right) \\
 & \ll \sum_{n\le \sqrt{x/t}} \frac{x \log t}{n (\log nt)^3} \exp\left(-\frac{\log xt}{6\log nt}\right) \\
  & \ll \frac{x \log t}{(\log xt)^2}.
\end{split}
\end{equation*}
\end{proof}

\begin{lemma}\label{lem2}
For $t \ge 2$, we have
\begin{equation*}
1= \sum_{n\ge 1} \frac{\chi_t(n)}{n} \prod_{p\le nt} \left(1-\frac{1}{p}\right).
\end{equation*}
\end{lemma}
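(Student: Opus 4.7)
The strategy is to divide the identity of Lemma \ref{lem1} by $x$, let $x\to\infty$ with $t$ fixed, and pass to the limit inside the sum. The left side $D(x,t)/x$ tends to $0$ by the upper bound $D(x,t)\ll x\log t/\log(xt)$ from \eqref{eric}, and the error term $O\left(1/x+\log t/(\log xt)^2\right)$ also tends to $0$. So the claim reduces to showing
\begin{equation*}
e^\gamma \sum_{n\le \sqrt{x/t}} \frac{\chi_t(n)}{n}\, \omega\!\left(\frac{\log x/n}{\log nt}\right)\prod_{p\le nt}\left(1-\frac{1}{p}\right) \longrightarrow \sum_{n\ge 1}\frac{\chi_t(n)}{n}\prod_{p\le nt}\left(1-\frac{1}{p}\right)
\end{equation*}
as $x\to\infty$. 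For each fixed $n$, the ratio $\log(x/n)/\log(nt)$ tends to $\infty$, and by Lemma \ref{omega}(ii) $e^\gamma \omega(u)\to 1$, so each individual term on the left converges to the corresponding term on the right.

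The main technical step is to justify the interchange of limit and summation. Lemma \ref{omega}(ii) gives $\omega(u)\le e^{-\gamma}+1$ uniformly for $u\ge 0$, so the $n$-th summand on the left is dominated by a constant times $\chi_t(n)/n\cdot\prod_{p\le nt}(1-1/p)$. I will verify that this dominating series converges: by Mertens' theorem $\prod_{p\le nt}(1-1/p)\asymp 1/\log(nt)$, and by \eqref{eric} the partial sums $\sum_{n\le X}\chi_t(n)=D(X,t)$ are $\ll X\log t/\log(Xt)$. Partial summation then yields
\begin{equation*}
\sum_{n\ge 1}\frac{\chi_t(n)}{n\log(nt)}\ \ll\ \int_1^\infty \frac{\log t}{u\log^2(ut)}\, \mathrm{d}u \ \ll\ 1,
\end{equation*}
giving an integrable majorant with respect to counting measure. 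Dominated convergence then delivers the limit, and combining the pieces produces $0=1-\sum_{n\ge 1}(\chi_t(n)/n)\prod_{p\le nt}(1-1/p)$, which is the assertion.

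The only real obstacle is confirming the absolute convergence of the majorant series; everything else is standard. In particular, the estimate \eqref{eric} is crucial here, because without some quantitative control on $\sum_{n\le X}\chi_t(n)$ one could not close the dominated-convergence argument. Lemma \ref{omega}(ii) plays the complementary role of providing both the pointwise limit $e^\gamma\omega(u)\to 1$ and the uniform boundedness of $\omega$ needed for the majorant.
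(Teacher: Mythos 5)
Your proof is correct and follows essentially the same approach as the paper: both start from Lemma \ref{lem1}, use \eqref{eric} to see that $D(x,t)=o(x)$ for fixed $t$, and exploit $e^\gamma\omega(u)\to 1$ to pass to the limit inside the sum. The only difference is bookkeeping: you package the interchange of limit and sum as a dominated-convergence argument with the majorant $\sum_n \chi_t(n)/(n\log nt)\ll 1$ (verified by partial summation against \eqref{eric}), whereas the paper splits the range at $\log nt=\sqrt{\log xt}$ and estimates each piece directly; these are two presentations of the same underlying estimate.
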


\begin{proof}
Fix $t\ge 2$ and let $x\to \infty$. Lemma \ref{lem1} and \eqref{eric} imply
$$ o(1) = 1 - \sum_{n\le \sqrt{x/t}} \frac{\chi_t(n)}{n} \, e^\gamma \omega\left(\frac{\log x/n}{\log nt}\right) 
\prod_{p\le nt} \left(1-\frac{1}{p}\right) .
$$
If $\log nt \le \sqrt{\log xt}$, then 
$$\frac{\log x/n}{\log nt} = \frac{\log xt}{\log nt}-1 \ge \sqrt{\log xt}-1,$$ 
hence
$$ \left| 1-e^\gamma \omega\left(\frac{\log x/n}{\log nt}\right) \right| \ll \exp \left(-\sqrt{\log xt}\right),$$
by Lemma \ref{omega}.
Thus, the contribution to the last sum from $n$ satisfying $\log nt \le \sqrt{\log xt}$ is 
$$ o(1)+ \sum_{\log nt \le \sqrt{\log xt}} \frac{\chi_t(n)}{n} \prod_{p\le nt} \left(1-\frac{1}{p}\right).$$
The result now follows since the contribution from $n$ with $\log nt > \sqrt{\log xt}$ is $o(1)$. 
\end{proof}

\begin{lemma}\label{lem3}
For $x\ge 1$, $t \ge 2$, we have
\begin{equation*}
D(x,t)= x\sum_{n\ge 1} \frac{\chi_t(n)}{n \log nt} 
\left( e^{-\gamma} - \omega\left(\frac{\log x/n}{\log nt}\right)\right)
 + O\left(1+\frac{x \log t}{(\log xt)^2}\right).
\end{equation*}
\end{lemma}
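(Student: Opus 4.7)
The plan is to combine Lemmas \ref{lem1} and \ref{lem2}, then approximate the Euler product $M(y):=\prod_{p\le y}(1-1/p)$ by $e^{-\gamma}/\log y$ via Mertens' theorem, absorbing the discrepancy into the stated error term.

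First, I would rewrite the leading $x$ in Lemma \ref{lem1} by invoking Lemma \ref{lem2} in the form $x = x\sum_{n\ge 1}\chi_t(n) M(nt)/n$, yielding
$$D(x,t) = x\sum_{n\le \sqrt{x/t}} \frac{\chi_t(n)}{n}\bigl(1-e^\gamma\omega(u_n)\bigr)M(nt) + x\sum_{n>\sqrt{x/t}}\frac{\chi_t(n)}{n}M(nt) + O\!\left(1 + \frac{x\log t}{(\log xt)^2}\right),$$
where $u_n := \log(x/n)/\log(nt)$. For $n>\sqrt{x/t}$ one has $u_n<1$, hence $\omega(u_n)=0$, and the two inner sums merge into a single sum over all $n\ge 1$.

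Next, I would invoke Mertens in the form $M(nt) = e^{-\gamma}/\log(nt) + E(nt)$ with $|E(nt)|\ll (\log nt)^{-2}$. The identity
$$\bigl(1-e^\gamma\omega(u_n)\bigr) M(nt) = \frac{e^{-\gamma}-\omega(u_n)}{\log nt} + \bigl(1-e^\gamma\omega(u_n)\bigr) E(nt)$$
isolates the desired main term and leaves a remainder $R := x\sum_{n\ge 1}(\chi_t(n)/n)(1-e^\gamma\omega(u_n)) E(nt)$ that must be shown to be $\ll 1 + x\log t/(\log xt)^2$.

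To control $R$ I would split at $n = \sqrt{x/t}$. On $n>\sqrt{x/t}$, the factor $1-e^\gamma\omega(u_n)$ equals $1$ and partial summation against Saias's bound \eqref{eric} gives $\sum_{n>\sqrt{x/t}}\chi_t(n)/(n(\log nt)^2) \ll \log t/(\log xt)^2$, contributing $\ll x\log t/(\log xt)^2$. On $n\le \sqrt{x/t}$, Lemma \ref{omega}(ii) bounds $|1-e^\gamma\omega(u_n)| \ll 1/\Gamma(u_n+1) \ll \exp(-u_n/3)$, so $R$ restricted to this range is dominated by the sum
$$x\sum_{n\le \sqrt{x/t}}\frac{\chi_t(n)}{n(\log nt)^2}\exp\!\left(-\frac{\log(x/n)}{3\log nt}\right),$$
which was already estimated $\ll x\log t/(\log xt)^2$ in the proof of Lemma \ref{lem1}. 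The main obstacle will be the tail $n>\sqrt{x/t}$, where no Buchstab-type decay is available and the bound relies entirely on \eqref{eric}; the saving $\log nt \ge \frac{1}{2} \log xt$ on this range is precisely what lets partial summation succeed.
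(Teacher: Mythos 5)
Your argument is correct and reaches the same intermediate equation as the paper---after substituting Lemma~\ref{lem2} into Lemma~\ref{lem1} and merging the sums over all $n\ge 1$ via $\omega(u)=0$ for $u<1$---but then diverges in how it controls the Mertens discrepancy. The paper splits at $\log nt = \sqrt{\log xt}$: in the small-$n$ range it reuses the super-exponential Buchstab decay from the proof of Lemma~\ref{lem2} to get a negligible $\ll x\exp(-\sqrt{\log xt})$, and in the large-$n$ range it invokes a strong (PNT-strength) form of Mertens with relative error $O((\log nt)^{-4})$, drops the indicator $\chi_t(n)$ entirely, and bounds the tail $\sum_{\log nt>\sqrt{\log xt}} 1/(n(\log nt)^5) \ll (\log xt)^{-2}$. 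You split instead at $n = \sqrt{x/t}$, use only the elementary Mertens bound $E(nt)\ll(\log nt)^{-2}$, and compensate on the tail $n>\sqrt{x/t}$ by retaining $\chi_t$ and appealing to Saias's estimate \eqref{eric} via partial summation; on $n\le\sqrt{x/t}$ you reuse the bound $|1-e^\gamma\omega(u_n)|\ll e^{-u_n/3}$ together with the sum already estimated in Lemma~\ref{lem1}'s proof. Both routes land on the stated error $O(1 + x\log t/(\log xt)^2)$. Yours is somewhat more self-contained in that it avoids the stronger Mertens estimate, at the cost of leaning more heavily on \eqref{eric}; the paper's version is cleaner in the large-$n$ range since it does not need the restriction $\chi_t$ there at all.
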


\begin{proof}
Since $\omega(u)=0$ for $u<1$, combining Lemmas \ref{lem1} and \ref{lem2} shows that $D(x,t)$ equals
\begin{equation*}
x\sum_{n\ge 1} \frac{\chi_t(n)}{n} 
\prod_{p\le nt} \left(1-\frac{1}{p}\right)
\left(1- e^\gamma \omega\left(\frac{\log x/n}{\log nt}\right)\right)
 + O\left(1+\frac{x \log t}{(\log xt)^2}\right).
\end{equation*}
As in the proof of Lemma \ref{lem2}, the contribution from $n$ with $\log nt \le \sqrt{\log xt}$ is 
$\ll x \exp \left(-\sqrt{\log xt}\right)$. 

For those $n$ for which $\log nt > \sqrt{\log xt}$, we use the estimate
$$ \prod_{p\le nt} \left(1-\frac{1}{p}\right) = \frac{e^{-\gamma}}{\log nt} 
\left(1+O\left(\frac{1}{(\log nt)^4}\right)\right).$$
The contribution from the error term is 
$$\ll x \sum_{\log nt > \sqrt{\log xt}} \frac{1}{n (\log nt)^5} \ll \frac{x}{(\log xt)^2}.$$
\end{proof}

\begin{lemma}\label{lem4}
For $x\ge 1$, $t \ge 2$, we have
\begin{equation*}
D(x,t)= x\int_{1}^{\infty} \frac{D(y,t)}{y^2 \log yt} 
\left( e^{-\gamma} - \omega\left(\frac{\log x/y}{\log yt}\right)\right) \, \mathrm{d}y
 + O\left(1+\frac{x \log t}{(\log xt)^2}\right).
\end{equation*}
\end{lemma}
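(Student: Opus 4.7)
The plan is to deduce Lemma \ref{lem4} from Lemma \ref{lem3} by converting the sum over $n$ into an integral via partial summation. Setting
\[
\phi(y) := \frac{1}{\log yt}\left(e^{-\gamma} - \omega\!\left(\frac{\log x/y}{\log yt}\right)\right),
\]
Lemma \ref{lem3} reads $D(x,t) = xS + O(1 + x\log t/(\log xt)^2)$ with $S := \sum_{n\ge 1}\chi_t(n)\phi(n)/n$, while Lemma \ref{lem4} is the same identity with $S$ replaced by $I := \int_1^\infty D(y,t)\phi(y)/y^2\, dy$. Thus it suffices to prove $S - I \ll \log t/(\log xt)^2$.

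I would first write $S = \int_{1^-}^{\infty}(\phi(y)/y)\, dD(y,t)$ and integrate by parts. The boundary term at infinity vanishes because once $y > x$ one has $\phi(y) = e^{-\gamma}/\log yt$, so $D(Y,t)\phi(Y)/Y \ll 1/\log Yt \to 0$. Expanding $(\phi(y)/y)' = \phi'(y)/y - \phi(y)/y^2$, the second piece reproduces $I$, leaving
\[
S - I = -\int_1^\infty \frac{D(y,t)\,\phi'(y)}{y}\, dy + E,
\]
where $E$ collects the contribution from the jump of $\phi$ at $y_0 := \sqrt{x/t}$ coming from the discontinuity of $\omega$ at $v = 1$ (relevant only when $x \ge t$). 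Since this jump of $\phi(y)/y$ has size $\ll 1/(y_0 \log y_0 t)$, the bound $D(y_0, t) \ll y_0\log t/\log y_0 t$ from \eqref{eric} yields $|E| \ll \log t/(\log xt)^2$.

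For the main integral I would use the explicit formula
\[
\phi'(y) = -\frac{e^{-\gamma} - \omega(v(y))}{y(\log yt)^2} + \frac{\omega'(v(y))\,\log xt}{y(\log yt)^3}, \qquad v(y) := \frac{\log x/y}{\log yt},
\]
together with the bounds $|e^{-\gamma}-\omega(v)|,\ |\omega'(v)| \le 1/\Gamma(v+1)$ from Lemma \ref{omega} and $D(y,t) \ll y\log t/\log yt$ from \eqref{eric}. For $y > \sqrt{x/t}$ both $\omega(v(y))$ and $\omega'(v(y))$ vanish, leaving only the $e^{-\gamma}$ piece, whose contribution is $O(\log t/(\log xt)^2)$ after the substitution $w = \log yt$. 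For $1 \le y \le \sqrt{x/t}$ I would change variable to $w = v(y)$; using $\log yt = \log xt/(w+1)$ and $dy/y = -\log xt/(w+1)^2\, dw$, both pieces of $\phi'$ reduce to integrals of the form $\frac{\log t}{(\log xt)^2}\int_1^{\log x/\log t}(w+1)^j/\Gamma(w+1)\, dw$ with $j \in \{1,2\}$, which are bounded by absolute constants thanks to the rapid decay of $1/\Gamma(w+1)$, yielding the same $O(\log t/(\log xt)^2)$.

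The only genuinely delicate step is the change of variable $w = v(y)$ and the careful tracking of powers of $\log xt$ and $w+1$; once that is set up, the super-exponential decay of $1/\Gamma(w+1)$ makes the integrals trivially convergent. The jump of $\omega$ at $v=1$ is a minor technical nuisance that happens to produce an error of exactly the target size.
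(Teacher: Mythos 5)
Your proposal is correct and follows essentially the same approach the paper indicates: partial summation applied to the sum in Lemma \ref{lem3}, with the error controlled via \eqref{eric} and Lemma \ref{omega}. You have supplied precisely the ``standard calculations'' the paper omits, including the jump correction at $y_0=\sqrt{x/t}$ from the discontinuity of $\omega$ at $1$ and the change of variable $w=v(y)$ exploiting the super-exponential decay of $1/\Gamma(w+1)$; all the pieces land on the claimed $O(\log t/(\log xt)^2)$ bound.
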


\begin{proof}
This result follows from applying partial summation to the sum in Lemma \ref{lem3}. All error terms are found to
be acceptable with the help of \eqref{eric} and Lemma \ref{omega}. We omit the calculations since they are standard. 
\end{proof}

\begin{proof}[Proof of Theorem \ref{thmD}]
From Lemma \ref{lem4} we have, for $x\ge 1$, $t \ge 2$, 
\begin{equation}\label{inteq}
D(x,t)= x \, \alpha(t) - x \int_{1}^{\infty} \frac{D(y,t)}{y^2 \log yt} 
\, \omega\left(\frac{\log x/y}{\log yt}\right)\mathrm{d}y
 + O\left(1+\frac{x \log t}{(\log xt)^2}\right),
\end{equation}
where
$$ \alpha(t):=e^{-\gamma} \int_{1}^{\infty} \frac{D(y,t)}{y^2 \log yt} \, \mathrm{d}y . $$
For $x\ge 1$, let $z\ge 0$ be given by  
$$ x=t^{e^z -1}$$
and let
$$ G_t(z) := \frac{D\left(t^{e^z -1},t\right)}{t^{e^z -1}} \, e^z = \frac{D(x,t)}{x} \frac{\log xt}{\log t} \asymp 1.$$
Multiplying \eqref{inteq} by $e^z/x$ and changing variables in the integral
via $y=t^{e^u -1}$, we get, for $z\ge 0$, $t\ge 2$,
\begin{equation}\label{conv}
\begin{split}
G_t(z) & =  \alpha(t) e^z - \int_{0}^{z} G_t(u) \, \omega\left(e^{z-u} -1\right) e^{z-u} \, \mathrm{d}u +E_t(z) \\
       & = \alpha(t) e^z - \int_{0}^{z} G_t(u) \, \Omega(z-u) e^{z-u} \, \mathrm{d}u +E_t(z), 
\end{split} 
\end{equation}
where 
\begin{equation}\label{Error}
E_t(z) \ll \frac{e^z}{t^{e^z -1}} + \frac{1}{e^z \log t} 
\end{equation}
and
$$ \Omega(u):= \omega\left(e^{u} -1\right) . $$
Now multiply \eqref{conv} by $e^{-zs}$, where $s\in \mathbb{C}$, $\re s > 1$, and integrate over $z\ge 0$ to obtain the equation of Laplace transforms
$$ \widehat{G}_t(s) = \frac{\alpha(t)}{s-1} -\widehat{G}_t(s) \, \widehat{\Omega}(s-1) + \widehat{E}_t(s) \qquad (\re s >1).$$
Hence,
$$\widehat{G}_t(s) = \frac{\alpha(t)}{(s-1) (1+\widehat{\Omega}(s-1))} + \frac{\widehat{E}_t(s)}{1+\widehat{\Omega}(s-1)}
\qquad (\re s >1).$$
Equation \eqref{dinteq} written in terms of 
$$ G(z):= e^z d(e^z -1) $$
is
$$ G(z) =e^z - \int_{0}^{z} G(u) \, \Omega(z-u) e^{z-u} \, \mathrm{d}u.$$ 
It follows that the Laplace transform of $G(z)$ is given by
$$ \widehat{G}(s) = \frac{1}{(s-1) (1+\widehat{\Omega}(s-1))} \qquad (\re s >1).$$
Thus,
\begin{equation*}
\begin{split}
\widehat{G}_t(s) & = \alpha(t) \widehat{G}(s) + \widehat{E}_t(s) \widehat{G}(s) (s-1) \\
& = \alpha(t) \widehat{G}(s) +\widehat{E}_t(s) ( \widehat{G'}(s) - \widehat{G}(s)+1 ),
\end{split}
\end{equation*}
since $G(0)=1$. Now
$$ G'(u)-G(u) = e^{2u} d'(e^u-1) = -C + O\left(e^{-2u}\right)$$
by \cite[Corollary 5]{IDD3}. Inversion of the Laplace transforms yields
\begin{equation*}
G_t(z) = \alpha(t) G(z) + \int_0^z E_t(u)\left( -C + O\left(e^{-2(z-u)}\right)\right) \mathrm{d}u + E_t(z).
\end{equation*}
From \eqref{Error} we have
\begin{equation}\label{betadef}
\beta(t) := -\int_0^\infty E_t(u) \, \mathrm{d}u 
= -\int_0^z E_t(u)\, \mathrm{d}u + O\left(\frac{1}{e^z \log t}\right)
\end{equation}
and
$$ \int_0^z E_t(u) \cdot O\left(e^{-2(z-u)}\right) \mathrm{d}u =  O\left(\frac{1}{e^z \log t}\right).$$
Thus,
\begin{equation*}
G_t(z) = \alpha(t) e^z d(e^z-1) + C \beta(t) +O\left(\frac{e^z}{t^{e^z -1}} + \frac{1}{e^z \log t}\right)
\end{equation*}
and
\begin{equation}\label{last}
D(x,t)=x \left(\alpha(t) d(v) + \frac{C \beta(t)}{v+1}\right) +O\left(1+\frac{x \log t}{(\log tx)^2}\right),
\end{equation}
for $x\ge1$,  $t \ge 2$. 
Note that \eqref{Error} and \eqref{betadef} imply $\beta(t) \ll 1/\log t $. To see that
$\alpha(t) -1 \ll 1/\log t$, put $x=t$ in \eqref{last} and use $D(x,x)=[x]$, $d(1)=1$. 
Hence \eqref{IDD3T1} allows us to write
\begin{equation}\label{lastlast}
D(x,t)=x\,  \eta(t) \, d(v)+O\left(1+\frac{x \log t}{(\log tx)^2}\right),
\end{equation}
for $x\ge1$,  $t \ge 2$, where
$$ \eta(t):=\alpha(t)+\beta(t) = 1 + O\left(\frac{1}{\log t}\right).$$
The lower bound $\eta(t)\ge \eta_0 >0$ follows for bounded $t$ from \eqref{lastlast} and \eqref{eric}.
Since $d(v)\gg 1/(v+1)$ by \eqref{eric} and \eqref{IDD2result}, the proof of Theorem \ref{thmD} is complete.
\end{proof}

\section{Proof of Corollary \ref{cor4}}\label{SectionM}

\begin{proof}
We first show that 
$$\{F(n)/n : n\ge 2\} = S .$$
Let 
$n=p_1^{\alpha_1} \cdots p_k^{\alpha_k}$ with $p_1<p_2<\ldots < p_k$. We have 
\begin{equation*}
  \frac{F(n)}{n}=\max_{1\le j \le k}\frac{p_j}{\prod\limits_{1\le i \le j-1} p_i^{\alpha_i}}
  =\frac{p_{j_0}}{\prod\limits_{1\le i \le j_0-1} p_i^{\alpha_i}}=:\frac{p}{m},
\end{equation*}
for some $j_0$, $1\le j_0 \le k$. If $j_0=1$, then $m=1$ and $F(n)/n=p\in S$. If $j_0>1$, then
\begin{equation*}
  \frac{F(m)}{m}=\max_{1\le j \le j_0-1}\frac{p_j}{\prod\limits_{1\le i \le j-1} p_i^{\alpha_i}}
  \le \frac{p}{m},
\end{equation*} 
so $F(m)\le p$ and $F(n)/n \in S$.
Conversely, if $p/m \in S$, then $m\le F(m) \le p$, so $F(mp)=\max(F(m)p,p^2)=p^2$. Thus 
$n=mp$ satisfies $F(n)/n= p/m$.

Next, we show that for $p/m \in S$ we have 
\begin{equation}\label{Mxpm}
M(x,p/m) = \#\{mpr\le x: P^-(r)\ge p, \ F(r)/r \le p^2 \}. 
\end{equation} 
From above we know that if $F(n)/n=p/m$, we must have $n=mpr$ with $ P^-(r)\ge p$.
Since $p^2r=F(n)=F(pr)=\max(p^2r, F(r))$, it follows that $F(r)\le p^2 r$.
Conversely, if $n=mpr$ satisfies $F(m)\le p$, $P^-(r)\ge p$ and $F(r)/r \le p^2$,
then $F(n) = \max(F(r),p^2 r, F(m)pr) = p^2 r$ and $F(n)/n = p/m$. 

From \eqref{Mxpm} it follows  that for $p/m \in S$ we have
\begin{equation}\label{Mxm}
M(x,p/m)= M(x/m,p),
\end{equation}
hence it suffices to estimate $M(x,p)$. Since $M(x,p)=D(x,p)-D(x,p-\varepsilon)$ 
for $0<\varepsilon < 1/x$, Corollary \ref{cor1} yields, for $(\log p)^2 \le \log x$,
\begin{equation}\label{Mxp}
M(x,p)= \frac{x\left(C(p)-C(p-\varepsilon)\right) \log p}{\log x p} 
+ O\left( \frac{x \log p}{\log^2 x} \right).
\end{equation}
Now substitute $\varepsilon_1$, $\varepsilon_2$ for $\varepsilon$, where $0<\varepsilon_1< \varepsilon_2 < 1/x$, and subtract to get
$$C(p-\varepsilon_1)-C(p-\varepsilon_2) = O(1/\log x).$$
Hence $ \lim_{\varepsilon\to 0^+} C(p-\varepsilon) =: C(p-0)$
exists by Cauchy's criterion. 
Letting $\varepsilon \to 0^+ $ in \eqref{Mxp} shows that
\begin{equation}\label{Mfinal}
M(x,p)= \frac{x K(p)}{\log x p} 
+ O\left( \frac{x \log p}{\log^2 x} \right),
\end{equation}
where $K(p)= (C(p)-C(p-0)) \log p $.

By \eqref{Mxpm} we have
$$ M(x,p) = \#\{r\le x/p: P^-(r)\ge p, \ F(r)/r \le p^2 \} \asymp \frac{x}{p \log (x/p)}, $$
for $x^{1/(3+\varepsilon)}\ge p\ge p_0$ and some suitable $p_0$, according to 
Saias \cite[Theorem 1]{Saias2} and \cite[Remark 2]{IDD1}. 
When $p< p_0$, we can iterate the functional equation in \cite[Lemma 2]{IDD1}
to show that the same estimate still holds.
With \eqref{Mxm} we get, for $p/m\in S$,
$$ M(x,p/m) \asymp \frac{x}{pm \log (x/pm)} 
\asymp \frac{x}{pm \log x} \qquad (p^{3+\varepsilon} m\le x). $$
Thus $ K(p) \asymp 1/p $ and the result follows from \eqref{Mxm} and \eqref{Mfinal}.
\end{proof}

\section{Proof of Theorem \ref{thmB}}

We will establish the following result, which is slightly more general than Theorem \ref{thmB}.
\begin{theorem}\label{thmBh}
Assume that $\theta(n)$ satisfies $\theta(1)\ge 2$ and 
$$ n\le  \theta(n) \le n f(n)  \qquad (n\ge 1)$$
for some non-decreasing function $f(x)$ for which 
$(\log f(x))^2/\log 2x$ is decreasing for sufficiently large $x$, and which satisfies
\begin{equation}\label{Hdef}
f(x) \ll \frac{\log 2x}{(\log \log 3x)^{1+\varepsilon}} \qquad (x \ge 1)
\end{equation} 
for some $\varepsilon >0$. Define
\begin{equation*}
h(x):=\int_x^\infty \frac{f(y)}{y (\log 2y)^2} \,\mathrm{d} y .
\end{equation*} 
There is a positive constant $c_\theta$ depending on $\theta$ such that
$$
B(x)=\frac{c_\theta x}{\log x} \Big\{1+O\left( h(x)\right)\Big\}
\qquad (x\ge 2) .
$$
\end{theorem}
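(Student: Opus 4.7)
The plan is to follow the architecture of the proof of Theorem \ref{thmD}, substituting the variable function $\theta(n)$ for the uniform threshold $nt$. The starting point is again Lemma \ref{mainlemma}: $[x]=\sum_{n\le x}\chi(n)\Phi(x/n,\theta(n))$. First one needs an a priori upper bound $B(x)\ll x/\log x$ of Saias type; this can be obtained by a Chebyshev-style argument from Lemma \ref{mainlemma} together with $\theta(n)\le nf(n)$ and the growth hypothesis \eqref{Hdef}, or imported from \cite{Saias1}. Splitting the identity at the threshold $n\theta(n)\asymp x$ (above which $\Phi(x/n,\theta(n))=1$), applying Lemma \ref{Phi} to the surviving terms, and using the a priori bound to absorb the error contributions, one obtains a direct analog of Lemma \ref{lem1}:
\begin{equation*}
B(x)=x-x\sum_{n\ge 1}\frac{\chi(n)}{n}\,e^\gamma\omega\!\left(\frac{\log(x/n)}{\log\theta(n)}\right)\prod_{p\le\theta(n)}\!\!\left(1-\tfrac{1}{p}\right)+O\!\left(\tfrac{x\,h(x)}{\log x}\right).
\end{equation*}

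Next, letting $x\to\infty$ yields the analog of Lemma \ref{lem2}: the identity $1=\sum_{n\ge 1}\chi(n)n^{-1}\prod_{p\le\theta(n)}(1-1/p)$, whose convergence is justified by the a priori bound. Subtracting $x$ times this identity from the previous display and inserting the refined Mertens estimate $\prod_{p\le y}(1-1/p)=e^{-\gamma}/\log y\,(1+O(\log^{-4}y))$ produces the analog of Lemma \ref{lem3}. Partial summation then converts the sum to an integral equation of the form
\begin{equation*}
B(x)=x\int_1^\infty \frac{B(y)}{y^2\log\theta(y)}\!\left(e^{-\gamma}-\omega\!\left(\frac{\log(x/y)}{\log\theta(y)}\right)\right)dy +O\!\left(\tfrac{x\,h(x)}{\log x}\right),
\end{equation*}
where $\theta$ is extended to a real argument by a piecewise-constant envelope; the jump discontinuities contribute an extra error absorbed through \eqref{Hdef}.

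The principal obstacle lies in solving this integral equation. In contrast to Theorem \ref{thmD}, the change of variables $y=t^{e^u-1}$ no longer renders the equation an autonomous convolution, so the Laplace-transform route is unavailable. My plan is to iterate instead. Lemma \ref{omega}(ii) tells us that $e^{-\gamma}-\omega(u)$ decays faster than any polynomial in $u$, so the kernel concentrates near $y\asymp x$, and inserting the trial function $c_\theta x/\log x$ yields a corrected approximation with relative error of size $h(x)$. The hypothesis that $(\log f(x))^2/\log 2x$ be eventually decreasing, together with \eqref{Hdef}, ensures both that $h(x)\to 0$ and that the errors accumulated across successive iterations telescope cleanly into the stated $O(h(x))$ bound. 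The constant $c_\theta>0$ is identified through this limiting analysis, mirroring the determination of $\eta(t)$ in the proof of Theorem \ref{thmD}, and in particular $c_\theta=\lim_{x\to\infty}B(x)(\log x)/x$.
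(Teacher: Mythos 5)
Your overall architecture — start from Lemma \ref{mainlemma}, apply Lemma \ref{Phi}, take $x\to\infty$ to get the analogue of Lemma \ref{lem2}, subtract, pass to an integral equation by partial summation, then solve that equation — is the right skeleton and agrees with the paper's strategy. But there are two substantive gaps.

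First, the a priori bound you assume, $B(x)\ll x/\log x$, is not available at the outset and cannot simply be imported from \cite{Saias1}, since $\theta(n)$ may grow as fast as $n f(n)$ with $f$ tending to infinity under \eqref{Hdef}. What one gets from \eqref{Bdef} and \eqref{eric} by comparing with $D(x,f(x))$ is only the weaker bound $B(x)\ll x\log f(x)/\log 2x$ (this is the paper's Lemma \ref{Bestimate}). The paper therefore carries an unknown factor $g(x)$ with $B(x)\ll xg(x)/\log 2x$, feeds it through the whole argument, and iterates the resulting relation $O(1/\varepsilon)$ times to drive $g$ down to a constant; your proposal never confronts this bootstrapping and your stated error terms ($O(xh(x)/\log x)$ throughout) implicitly presuppose the final bound rather than deriving it.

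Second, and more importantly, your claim that ``the Laplace-transform route is unavailable'' because the integral equation with kernel depending on $\log\theta(y)$ is not a convolution misses the paper's key device: Lemma \ref{lem3.5P} replaces $\log\theta(n)$ by $\log 2n$ in both its occurrences, using $\log\theta(n)=\log(2n)\bigl(1+O(\log f(n)/\log 2n)\bigr)$ and carefully estimating the resulting error (which is where the hypothesis that $(\log f(x))^2/\log 2x$ be eventually decreasing actually gets used — not in the diffuse ``telescoping'' way you describe). Once $\log\theta(y)$ is swapped for $\log 2y$, the substitution $y=2^{e^u-1}$ does yield an autonomous convolution and the Laplace-transform machinery from Theorem \ref{thmD} runs verbatim with $t=2$. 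Your alternative of iterating around a trial solution $c_\theta x/\log x$ is underspecified: the constant $c_\theta$ is not known in advance, the kernel $e^{-\gamma}-\omega(\log(x/y)/\log 2y)$ does not in fact concentrate near $y\asymp x$ (for $y>x$ it is identically $e^{-\gamma}$), and you give no mechanism that would make an unknown trial constant emerge from the iteration and a relative error of exactly $O(h(x))$ accumulate. As written, this part of the proposal is a plan, not a proof, and it bypasses precisely the step (Lemma \ref{lem3.5P}) that makes the rest of the paper's argument go through.
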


Theorem \ref{thmB} follows from Theorem \ref{thmBh} with $f(x)=A(\log 2x)^a (\log \log 3x)^b$ for suitable constants $A$, $a$, $b$.

The proof of Theorem \ref{thmBh} is quite similar to that of Theorem \ref{thmD}.
Some extra effort is required because, unlike with $D(x,t)$, the order of magnitude of 
$B(x)$ is not known from the start. Lemma \ref{Bestimate} gives a first approximation.
Another difference is that $\log \theta(n)$ will have to be approximated by $\log 2n$ 
in Lemma \ref{lem3.5P}.  

\begin{lemma}\label{Bestimate}
Assume that $\theta(n)$ satisfies $\theta(1)\ge 2$ and $n\le \theta(n) \le n f(n)$ for $n\ge 1$,
where $f(x)$ is a non-decreasing function.
Then 
$$ \frac{x}{\log 2x} \ll B(x) \ll  \frac{x\log f(x)}{\log 2x} \qquad (x\ge 1).$$
\end{lemma}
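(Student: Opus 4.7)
\emph{Proof proposal.} The plan is to prove the two inequalities separately, in both cases by sandwiching $\mathcal{B}$ between sets of the form $\{n : F(n)/n \le t\}$ whose cardinalities are already controlled by Saias's two-sided estimate \eqref{eric}.

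For the upper bound, I will use the monotonicity of $f$. Since $\theta(m) \le m f(m) \le m f(x)$ for every $m \le x$, any $n \le x$ in $\mathcal{B}$ automatically satisfies $p_j \le f(x) \prod_{i<j} p_i^{\alpha_i}$ for all $j$, and hence is counted by $D(x, f(x))$. The hypothesis $\theta(1) \ge 2$ together with $\theta(1) \le 1 \cdot f(1)$ guarantees $f(x) \ge f(1) \ge 2$, so \eqref{eric} applies; combined with the trivial inequality $\log(xf(x)) \ge \log 2x$, this gives $B(x) \le D(x,f(x)) \ll x \log f(x)/\log 2x$.

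For the lower bound, I will exhibit an injection $m \mapsto 2m$ from the set counted by $D(y,2)$ into $\mathcal{B}\cap[1,2y]$. Given $m = 2^{\alpha_1} p_2^{\alpha_2} \cdots p_k^{\alpha_k}$ with $p_j \le 2 \prod_{i<j} p_i^{\alpha_i}$ for every $j$ (which forces $p_1 = 2$ when $m > 1$), the integer $2m$ has factorization $2^{\alpha_1+1} p_2^{\alpha_2} \cdots p_k^{\alpha_k}$. The initial condition $p_1 = 2 \le \theta(1)$ holds by hypothesis, and for $j \ge 2$ the $D(\cdot,2)$ inequality yields
\[
p_j \le 2 \prod_{i<j} p_i^{\alpha_i} = 2^{\alpha_1+1} p_2^{\alpha_2} \cdots p_{j-1}^{\alpha_{j-1}} \le \theta\bigl(2^{\alpha_1+1} p_2^{\alpha_2} \cdots p_{j-1}^{\alpha_{j-1}}\bigr),
\]
where the final step uses $\theta(N) \ge N$. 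Hence $2m \in \mathcal{B}$, and injectivity of doubling is immediate. The lower bound in \eqref{eric} with $t = 2$ then gives $B(x) \ge D(x/2, 2) \gg x/\log 2x$ for $x$ bounded away from $1$, while the trivial estimate $B(x) \ge 1$ handles the remaining small range.

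No step appears genuinely difficult. The only point demanding a little care is the bookkeeping that the single factor of $2$ appearing in the $D(\cdot,2)$ hypothesis on $m$ is exactly absorbed by the exponent shift $\alpha_1 \mapsto \alpha_1+1$ when passing to $2m$, so that the resulting inequality matches the $\theta$-condition defining $\mathcal{B}$; the assumption $\theta(1) \ge 2$ is used precisely to certify $p_1 = 2$ in the factorization of $2m$.
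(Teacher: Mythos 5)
Your proposal is correct and follows essentially the same route as the paper: bound $B(x)$ from above by $D(x,f(x))$ using $\theta(n)\le nf(n)\le nf(x)$ and monotonicity of $f$, and from below by the doubling injection $m\mapsto 2m$ from integers counted by $D(x/2,2)$, then invoke Saias's two-sided estimate \eqref{eric} in each direction. The extra bookkeeping you supply (verifying $f(x)\ge 2$ so that \eqref{eric} applies, checking the exponent shift $\alpha_1\mapsto\alpha_1+1$, handling the small-$x$ range) is exactly the detail the paper leaves implicit.
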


\begin{proof}
If $n\le x/2$ is counted in $D(x/2,2)$ then \eqref{Bdef} implies that $2n$ is counted in $B(x)$,
since $\theta(1)\ge 2$ and $\theta(n)\ge n$. Thus,
$$B(x) \ge D(x/2,2) \gg \frac{x}{\log 2x}$$
by \eqref{eric}.

If $n\le x$ is counted in $B(x)$, then $n$ is also counted in $D(x,f(x))$, since
$\theta(n)\le n f(n) \le n f(x)$. Hence,
$$B(x) \le D(x,f(x)) \ll \frac{x\log f(x)}{\log 2x}$$
by \eqref{eric}.
\end{proof}

In the following, assume that $\theta(n)$ and $f(x)$ satisfy the conditions
of Theorem \ref{thmBh}, and that
\begin{equation}\label{Bbound}
B(x) \ll \frac{x g(x)}{\log 2x} \qquad (x\ge 1),
\end{equation}
for some non-decreasing function $g(x)$. Lemma \ref{Bestimate} shows that we may
assume $1\le g(x)\le \log f(x) \ll \log \log 3x$.

\begin{lemma}\label{lemP0}
For $x\ge 1$, we have 
$$ [x]=B(x)-B(\sqrt{x}) + \sum_{n\le \sqrt{x}} \chi(n) \Phi(x/n, \theta(n)).$$
\end{lemma}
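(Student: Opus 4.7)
The plan is to apply Lemma \ref{mainlemma} directly and split the resulting sum at $\sqrt{x}$; the only content beyond the split is to show that the contribution from $n > \sqrt{x}$ collapses to $B(x) - B(\sqrt{x})$. First I would write
\begin{equation*}
[x] = \sum_{n\le x} \chi(n)\,\Phi(x/n,\theta(n))
= \sum_{n\le \sqrt{x}} \chi(n)\,\Phi(x/n,\theta(n))
+ \sum_{\sqrt{x} < n \le x} \chi(n)\,\Phi(x/n,\theta(n)),
\end{equation*}
using Lemma \ref{mainlemma} (and noting that $\Phi(x/n,\theta(n)) = 0$ for $n > x$, so restricting the outer sum to $n \le x$ costs nothing).

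Next I would evaluate $\Phi(x/n, \theta(n))$ for $\sqrt{x} < n \le x$. For such $n$, the hypothesis $\theta(n) \ge n$ from Theorem \ref{thmBh} gives $\theta(n) \ge n > \sqrt{x} \ge x/n$. Consequently, any positive integer $m \le x/n$ with $P^-(m) > \theta(n)$ must satisfy $P^-(m) > x/n \ge m$, which forces $m = 1$. Hence $\Phi(x/n,\theta(n)) = 1$ throughout the range $\sqrt{x} < n \le x$.

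Substituting this back yields
\begin{equation*}
\sum_{\sqrt{x} < n \le x} \chi(n)\,\Phi(x/n,\theta(n))
= \sum_{\sqrt{x} < n \le x} \chi(n)
= B(x) - B(\sqrt{x}),
\end{equation*}
which gives the claimed identity after a trivial rearrangement. There is no real obstacle here: the lemma is essentially a bookkeeping reorganization of Lemma \ref{mainlemma}, with the key observation being the monotone growth condition $\theta(n) \ge n$ that makes the large-$n$ tail of the convolution trivial. The only mild care needed is to note that $\chi(n)$ is the characteristic function of $\mathcal{B}$ (defined in \eqref{Bdef}), so the unweighted sum $\sum_{\sqrt{x} < n \le x} \chi(n)$ is indeed $B(x) - B(\sqrt{x})$.
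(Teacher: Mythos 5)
Your proof is correct and is exactly the approach the paper takes, just unpacked in more detail: the paper's one-line proof says precisely that Lemma \ref{mainlemma} applies and that $\theta(n)\ge n$ forces $\Phi(x/n,\theta(n))=1$ for $n>\sqrt{x}$, which is the observation you verify.
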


\begin{proof}
This follows from Lemma \ref{mainlemma} since $\theta(n)\ge n$ and $\Phi(x/n, \theta(n))=1$ for $n>\sqrt{x}$.
\end{proof}

\begin{lemma}\label{lem1P}
For $x\ge 1$, we have
\begin{equation*}
B(x)=x\ -\ x\sum_{n\le \sqrt{x}} \frac{\chi(n)}{n} \, e^\gamma \omega\left(\frac{\log x/n}{\log \theta(n)}\right) 
\prod_{p\le \theta(n)}  \left(1-\frac{1}{p}\right) + O\left(\frac{x f(x)g(x)}{(\log 2x)^2}\right).
\end{equation*}
\end{lemma}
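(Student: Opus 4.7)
The plan is to follow the structure of the proof of Lemma \ref{lem1}. I rewrite Lemma \ref{lemP0} as
\[
B(x) = [x] + B(\sqrt x) - \sum_{n \le \sqrt x} \chi(n)\, \Phi(x/n, \theta(n))
\]
and insert the estimate of Lemma \ref{Phi} into each $\Phi(x/n, \theta(n))$, with $u_n := (\log x/n)/\log \theta(n)$. This isolates the asserted main term and produces two error sums,
\[
E_1 := \sum_{n \le \sqrt x} \chi(n)\, \frac{\theta(n)}{\log \theta(n)}, \qquad E_2 := \sum_{n \le \sqrt x} \chi(n)\, \frac{x/n}{(\log \theta(n))^2}\exp\!\left(-\frac{u_n}{3}\right),
\]
together with the harmless contributions $[x]-x = O(1)$ and $B(\sqrt x) \ll \sqrt x\, g(\sqrt x)/\log 2x$, both of which are dwarfed by $x f(x) g(x)/(\log 2x)^2$.

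For $E_1$, since $f$ is non-decreasing and $\log \theta(n) \gg \log 2n$ (from $\theta(n) \ge n$ and $\theta(1) \ge 2$),
\[
E_1 \ll f(\sqrt x) \sum_{n \le \sqrt x} \chi(n)\, \frac{n}{\log 2n}.
\]
Partial summation against \eqref{Bbound} gives $\sum_{n \le y} \chi(n) n/\log 2n \ll y^2 g(y)/(\log 2y)^2$, so the monotonicity of $f$ and $g$ yields $E_1 \ll x f(x) g(x)/(\log 2x)^2$, exactly the claimed error.

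For $E_2$, I decompose the sum dyadically into blocks $I_k = (2^{k-1}, 2^k]$ with $2^k \le \sqrt x$. Hypothesis \eqref{Hdef} forces $\log f(2^k) = O(\log k)$, so $\log \theta(n) \asymp k$ uniformly on $I_k$ and $u_n \gg (\log x)/k$. Combined with $\#\{n \in I_k : \chi(n) = 1\} \le B(2^k) \ll 2^k g(2^k)/k$ from \eqref{Bbound}, the $k$-th block contributes $\ll (x g(x)/k^3)\exp(-c(\log x)/k)$. The substitution $y = (\log x)/k$ converts the sum into the bounded integral $(\log x)^{-2} \int_{2\log 2}^{\infty} y e^{-cy}\, dy = O(1/(\log x)^2)$, giving $E_2 \ll x g(x)/(\log 2x)^2 \le x f(x) g(x)/(\log 2x)^2$.

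The main obstacle lies in $E_2$: bounding $\exp(-u_n/3) \le 1$ only yields $E_2 \ll x$, which is useless. The genuine exponential decay in $u_n$ must be extracted carefully, and this relies essentially on \eqref{Hdef} keeping $\log \theta(n)$ within a logarithmic factor of $\log 2n$, so that $u_n$ stays large enough on each dyadic block to dominate the block count coming from \eqref{Bbound}.
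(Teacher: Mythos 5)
Your proof is correct and follows essentially the same route as the paper: rearrange Lemma \ref{lemP0}, insert Lemma \ref{Phi}, and control the two resulting error sums, with the $e^{-u/3}$ sum handled by a dyadic decomposition exactly as the paper does (the paper defers this to the parallel computation in Lemma \ref{lem1} with $t=2$). The only cosmetic difference is in $E_1$: the paper simply pulls out the maximal summand $\sqrt x f(\sqrt x)/\log 2\sqrt x$ and multiplies by $B(\sqrt x)\ll \sqrt x\,g(\sqrt x)/\log 2\sqrt x$, whereas you reach the same bound via partial summation against \eqref{Bbound}; both are fine.
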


\begin{proof}
We apply Lemma \ref{Phi} to estimate each occurrence of $\Phi(x/n, \theta(n))$ in Lemma \ref{lemP0}. 
The contribution from the error term $O(y/\log y)$ is
\begin{equation*}
 \ll \sum_{n\le \sqrt{x}} \chi(n) \frac{n f(n)}{\log 2n}
\ll \frac{\sqrt{x} f(\sqrt{x})}{\log 2\sqrt{x}} \sum_{n\le \sqrt{x}} \chi(n) \ll \frac{x f(x)g(x)}{(\log 2x)^2}.
\end{equation*}
For the contribution from $O\left( \frac{xe^{-u/3}}{(\log y)^2}\right)$, note that $\log y=\log \theta(n) \asymp \log 2n$
and $u=\log(x/n)/\log \theta(n) \asymp \log(x/n)/\log(2n)$. We can estimate the contribution from this error term as in Lemma \ref{lem1} with $t=2$ and find that it is $\ll x g(x)/(\log 2x)^2$. 
\end{proof}

\begin{lemma}\label{lem2P}
We have
\begin{equation*}
1= \sum_{n\ge 1} \frac{\chi(n)}{n} \prod_{p\le \theta(n)} \left(1-\frac{1}{p}\right).
\end{equation*}
\end{lemma}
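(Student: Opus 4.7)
The plan is to mirror the proof of Lemma~\ref{lem2}, using Lemma~\ref{lem1P} in place of Lemma~\ref{lem1}. First, divide the identity in Lemma~\ref{lem1P} by $x$ and let $x\to\infty$. The left-hand side $B(x)/x$ tends to $0$: Lemma~\ref{Bestimate} combined with $f(x)\ll\log 2x$ gives $B(x)/x\ll \log\log 3x/\log 2x$. The error term in Lemma~\ref{lem1P} divided by $x$ is $f(x)g(x)/(\log 2x)^2$, which also tends to $0$ because the hypothesis \eqref{Hdef} and $g(x)\ll \log\log 3x$ yield a bound of order $1/\bigl(\log 2x\,(\log\log 3x)^{\varepsilon}\bigr)$. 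Consequently,
\begin{equation*}
 o(1) \;=\; 1 \;-\; \sum_{n\le \sqrt{x}} \frac{\chi(n)}{n}\, e^{\gamma}\, \omega\!\left(\frac{\log x/n}{\log \theta(n)}\right) \prod_{p\le \theta(n)} \left(1-\frac{1}{p}\right).
\end{equation*}

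Next, split this sum at the threshold $\log\theta(n)\le\sqrt{\log 2x}$. For such ``small'' $n$, the bound $n\le\sqrt{x}$ forces $\log(x/n)/\log\theta(n)\ge \tfrac12 \log x/\sqrt{\log 2x}\to\infty$, so Lemma~\ref{omega}(ii) lets us replace $e^{\gamma}\omega(\cdot)$ by $1$ at the cost of an error that is super-polynomially small in $\log x$ and therefore negligible after summation. For the ``large'' $\log\theta(n)$ contribution, Mertens' theorem gives $\prod_{p\le\theta(n)}(1-1/p)\asymp 1/\log\theta(n)\ll 1/\sqrt{\log 2x}$; partial summation from \eqref{Bbound} yields $\sum_{n\le\sqrt{x}} \chi(n)/n \ll g(\sqrt{x})\log\log 3x \ll (\log\log 3x)^2$, so this piece is $\ll (\log\log 3x)^{2}/\sqrt{\log 2x}=o(1)$.

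Combining these gives
\begin{equation*}
 1 \;=\; \sum_{\substack{n\le \sqrt{x}\\ \log\theta(n)\le \sqrt{\log 2x}}} \frac{\chi(n)}{n} \prod_{p\le \theta(n)} \left(1-\frac{1}{p}\right) + o(1).
\end{equation*}
It remains to check that the infinite series $\sum_{n\ge 1}\chi(n)/n \cdot \prod_{p\le\theta(n)}(1-1/p)$ converges absolutely, so that we may pass to the limit in the displayed sum. This follows from $\prod_{p\le\theta(n)}(1-1/p)\ll 1/\log 2n$ (since $\theta(n)\ge n$) together with \eqref{Bbound} and partial summation, which control the tail by $\int^{\infty} g(u)/\bigl(u(\log 2u)^{2}\bigr)\,\mathrm{d}u$; this integral is finite because $g(u)\ll\log\log 3u$. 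Letting $x\to\infty$ produces the claimed identity.

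The only non-routine step is the ``large $\log\theta(n)$'' tail: unlike in Lemma~\ref{lem2}, we cannot appeal to the sharp bound \eqref{eric} with a fixed parameter $t$, and must instead rely on the a~priori bound \eqref{Bbound} from Lemma~\ref{Bestimate} together with the decay hypothesis \eqref{Hdef} on $f$. The factor $(\log\log 3x)^{\varepsilon}$ gained from \eqref{Hdef} is precisely what makes the error term in Lemma~\ref{lem1P} admissible, so the exponent $1+\varepsilon$ in \eqref{Hdef} enters the argument in an essential way.
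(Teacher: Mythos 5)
Your proposal is correct and takes essentially the same approach as the paper, which simply says to repeat the proof of Lemma~\ref{lem2} with each $xt$ replaced by $2x$ and each $nt$ by $\theta(n)$. You have filled in the details of that substitution carefully: the use of Lemma~\ref{Bestimate} and the bound $g(x)\ll\log\log 3x$ to make $B(x)/x$ and the error term vanish, the split at $\log\theta(n)\le\sqrt{\log 2x}$ with Lemma~\ref{omega}(ii) on the short range and a crude Mertens/partial-summation bound on the long range, and the convergence of the full series via \eqref{Bbound} — all of which are exactly what the cited proof of Lemma~\ref{lem2} requires once the indicated replacements are made.
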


\begin{proof}
We follow the proof of Lemma \ref{lem2}, replacing each $xt$ by $2x$ and each $nt$ by $\theta(n)$.
\end{proof}

\begin{lemma}\label{lem3P}
For $x\ge 1$, we have
\begin{equation*}
B(x)= x\sum_{n\ge 1} \frac{\chi(n)}{n \log \theta(n)} 
\left( e^{-\gamma} - \omega\left(\frac{\log x/n}{\log \theta(n)}\right)\right)
 + O\left(\frac{x f(x) g(x)}{(\log 2x)^2}\right) .
\end{equation*}
\end{lemma}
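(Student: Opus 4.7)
The argument closely mirrors the proof of Lemma~\ref{lem3}. The starting observation is that the restriction $n\le\sqrt{x}$ in Lemma~\ref{lem1P} is cosmetic: for $n>\sqrt{x}$ we have $x/n<\sqrt{x}\le n\le\theta(n)$, hence $\log(x/n)/\log\theta(n)<1$ and the $\omega$-factor vanishes. Extending the sum to all $n\ge 1$ and then using Lemma~\ref{lem2P} to rewrite the leading ``$x$'' as $x\sum_{n\ge 1}(\chi(n)/n)\prod_{p\le\theta(n)}(1-1/p)$, the two expressions combine into
\[
B(x)=x\sum_{n\ge 1}\frac{\chi(n)}{n}\prod_{p\le\theta(n)}\left(1-\frac{1}{p}\right)\left(1-e^\gamma\omega\left(\frac{\log x/n}{\log\theta(n)}\right)\right)+O\left(\frac{xf(x)g(x)}{(\log 2x)^2}\right).
\]

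Next I would split this sum at the threshold $\log\theta(n)=\sqrt{\log 2x}$. For the small range $\log\theta(n)\le\sqrt{\log 2x}$, the bound $n\le\theta(n)$ forces $\log n\le\sqrt{\log 2x}$ and hence $\log(x/n)/\log\theta(n)\ge\sqrt{\log 2x}/2$ for $x$ large enough. Lemma~\ref{omega}(ii) then makes both $|1-e^\gamma\omega(\cdot)|$ and $|e^{-\gamma}-\omega(\cdot)|$ at most $e^\gamma/\Gamma(\sqrt{\log 2x}/2+1)$, decaying faster than any negative power of $\log 2x$. Combined with the crude bound $\sum_{n\ge 1}(\chi(n)/n)\prod_{p\le\theta(n)}(1-1/p)=1$ from Lemma~\ref{lem2P}, the small range contributes $O(xe^{-c\sqrt{\log 2x}})$ to the displayed sum, and by the same Buchstab-decay argument also contributes $O(xe^{-c\sqrt{\log 2x}})$ to the target sum of Lemma~\ref{lem3P} (after noting that $\sum_{n}\chi(n)/(n\log\theta(n))=O(1)$, which follows from Lemma~\ref{lem2P} together with Mertens' theorem).

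For the large range $\log\theta(n)>\sqrt{\log 2x}$, I would invoke Mertens in the form
\[
\prod_{p\le\theta(n)}\left(1-\frac{1}{p}\right)=\frac{e^{-\gamma}}{\log\theta(n)}\left(1+O\left(\frac{1}{(\log\theta(n))^4}\right)\right).
\]
Since $1-e^\gamma\omega(\cdot)$ is uniformly bounded, multiplying through converts the summand into $(e^{-\gamma}-\omega(\cdot))/\log\theta(n)+O(1/(\log\theta(n))^5)$. The total error contribution is
\[
x\sum_{\log\theta(n)>\sqrt{\log 2x}}\frac{\chi(n)}{n(\log\theta(n))^5}\ll\frac{x}{(\log 2x)^2}\sum_{n\ge 1}\frac{\chi(n)}{n\log\theta(n)}\ll\frac{x}{(\log 2x)^2},
\]
again using the $O(1)$ bound from the previous paragraph.

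All the additional errors are $O(x/(\log 2x)^2)$ or smaller and are absorbed into $O(xf(x)g(x)/(\log 2x)^2)$ because $f,g\ge 1$. I anticipate no genuine obstacle, since the argument is structurally identical to Lemma~\ref{lem3}. The only routine bookkeeping concerns verifying that $\log\theta(n)\asymp\log 2n$ uniformly in $n\ge 1$, which follows from the standing hypothesis $n\le\theta(n)\le nf(n)$ together with $\log f(n)\ll\log\log 3n$ from \eqref{Hdef}; this ensures that the threshold $\sqrt{\log 2x}$ really does separate $n$ into a ``small'' and ``large'' range in terms of $n$ itself, and that all comparisons between $\log\theta(n)$ and $\log 2n$ work as in the proof of Lemma~\ref{lem3}.
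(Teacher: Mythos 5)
Your proposal follows the same route as the paper: extend the sum in Lemma~\ref{lem1P} to all $n$ (noting the $\omega$-factor vanishes for $n>\sqrt{x}$), combine with Lemma~\ref{lem2P}, split at a threshold near $\sqrt{\log 2x}$ (you use $\log\theta(n)$ where the paper uses $\log 2n$, which is equivalent since $\log\theta(n)\asymp\log 2n$), handle the small range via Lemma~\ref{omega}, and apply a strong form of Mertens' formula in the large range. The argument and the bookkeeping are correct and match the paper's proof.
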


\begin{proof}
Combining Lemmas \ref{lem1P} and \ref{lem2P} we see that $B(x)$ equals
\begin{equation*}
x\sum_{n\ge 1} \frac{\chi(n)}{n} 
\prod_{p\le \theta(n)} \left(1-\frac{1}{p}\right)
\left(1- e^\gamma \omega\left(\frac{\log x/n}{\log \theta(n)}\right)\right)
 + O\left(\frac{x f(x)g(x)}{(\log 2x)^2}\right).
\end{equation*}
As in the proof of Lemma \ref{lem3}, the contribution from $n$ with $\log 2n \le \sqrt{\log 2x}$ is 
$\ll x \exp \left(-\sqrt{\log 2x}\right)$. For $n$ with $\log 2n > \sqrt{\log 2x}$ we use a
strong form of Mertens' formula to estimate the product over primes.
\end{proof}

\begin{lemma}\label{lem3.5P}
For $x\ge 1$, we have
\begin{equation*}
B(x)= x\sum_{n\ge 1} \frac{\chi(n)}{n \log 2n} 
\left( e^{-\gamma} - \omega\left(\frac{\log x/n}{\log 2n}\right)\right)
 + O\left(\frac{x f(x)g(x)}{(\log 2x)^2}\right).
\end{equation*}
\end{lemma}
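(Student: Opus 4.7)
The plan is to compare the sum in Lemma \ref{lem3P} (with $\log\theta(n)$) to the proposed sum in Lemma \ref{lem3.5P} (with $\log 2n$) by bounding the pointwise difference of summands. Set $A=\log\theta(n)$, $B=\log 2n$, $L=\log(x/n)$ (the counting function of $\mathcal{B}$ continues to be written $B(\cdot)$, with its argument). The hypothesis $n\le\theta(n)\le nf(n)$ together with \eqref{Hdef} yields $|A-B|\ll\log f(n)\ll\log\log 3n$. I would use the algebraic identity
\begin{equation*}
\frac{e^{-\gamma}-\omega(L/A)}{A}-\frac{e^{-\gamma}-\omega(L/B)}{B}=\bigl(e^{-\gamma}-\omega(L/A)\bigr)\Bigl(\frac{1}{A}-\frac{1}{B}\Bigr)+\frac{\omega(L/B)-\omega(L/A)}{B}
\end{equation*}
to decompose the pointwise difference into a \emph{gap} piece (bounded by $|1/A-1/B|\cdot|e^{-\gamma}-\omega(L/A)|$ with $|1/A-1/B|\ll\log f(n)/(\log 2n)^2$) and an \emph{$\omega$-difference} piece.

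Following the strategy used in the proof of Lemma \ref{lem3P}, I would split the sum at $\log 2n=\sqrt{\log 2x}$. In the low range, \eqref{Hdef} forces $\log\theta(n)\ll\sqrt{\log 2x}$, so both $L/A$ and $L/B$ exceed a constant times $\sqrt{\log 2x}$; by Lemma \ref{omega}(ii) each summand appearing in both Lemma \ref{lem3P} and Lemma \ref{lem3.5P} is already super-exponentially small individually, so this range contributes negligibly to the difference. In the high range $\log 2n>\sqrt{\log 2x}$, Lemma \ref{omega}(ii) bounds $|e^{-\gamma}-\omega(L/A)|\le 1/\Gamma(L/A+1)$, while the mean value theorem combined with Lemma \ref{omega}(i) gives $|\omega(L/A)-\omega(L/B)|\ll L\log f(n)\cdot\Gamma(\min(L/A,L/B)+1)^{-1}/(\log 2n)^2$ whenever $L/A$ and $L/B$ lie on the same side of $1$. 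The remaining ``straddle'' case, in which $L/A$ and $L/B$ bracket the discontinuity of $\omega$ at $u=1$, is confined to $n$ in an interval of logarithmic width $\asymp\log f(n)$ near $\sqrt{x/2}$; there the crude estimate $|\omega(L/A)-\omega(L/B)|\le 1$ combined with direct partial summation contributes at most $\log f(x)g(x)/(\log 2x)^2$.

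Outside the straddle range, I would partition $n$ dyadically by $\lfloor L/B\rfloor=k\ge 0$ (so $n\asymp x^{1/(k+1)}$ and $\log 2n\asymp(\log 2x)/(k+1)$). Partial summation via the assumed bound $B(y)\ll yg(y)/\log 2y$ shows that the $k$-th dyadic layer contributes $\ll g(x)k/(\log 2x)^2$ to the unweighted sum $\sum_n\chi(n)/(n(\log 2n)^2)$; multiplying by the factorial factor $1/\Gamma(k+1)$ coming from Lemma \ref{omega} and summing the convergent series $\sum_{k\ge 0}(k+1)/k!$ yields $\ll g(x)/(\log 2x)^2$. Absorbing the weight $\log f(n)\le\log f(x)$ for $n\le x$ (with the tail $n>x$ handled analogously via $\log f(n)\ll\log\log 3n$), the total bound is $|S_1-S_2|\ll\log f(x)g(x)/(\log 2x)^2$, and multiplying by $x$ and using $\log f(x)\le f(x)$ (valid since $\theta(n)\ge n$ forces $f\ge 1$) gives the target $O(xf(x)g(x)/(\log 2x)^2)$.

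The hard part will be organizing the partial summation in the high range so that the factorial decay supplied by Lemma \ref{omega} is actually exploited, restricting the effective range of summation to $n$ with $L/B$ bounded (where $\log 2n\asymp\log 2x$). Without this factorial decay, the naive partial summation bound would degrade to $x\log f(x)g(x)/\log 2x$, losing a full factor of $\log 2x$ relative to the required error; tracking the straddle region separately is also a small but necessary technical wrinkle owing to the jump of $\omega$ at $u=1$.
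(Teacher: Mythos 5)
Your proposal follows essentially the same route as the paper's proof. The paper also (i) writes $\log\theta(n)=\log(2n)\{1+O(\log f(n)/\log 2n)\}$, (ii) separates the error introduced by replacing $\log\theta(n)$ by $\log 2n$ in the outer factor $1/\log\theta(n)$ from the error introduced inside $\omega(\cdot)$ — your ``gap piece'' versus ``$\omega$-difference piece'' decomposition — and (iii) handles the $\omega$-difference by splitting into the case where $L/A$ and $L/B$ are both $\ge 1$ (paper: $|\omega(u_2)-\omega(u_1)|\ll (u_2-u_1)e^{-u_1}$; you: mean value theorem with the sharper $1/\Gamma$ bound from Lemma~\ref{omega}(i)) and the straddle case where they bracket $u=1$, localized to $n\in[\sqrt{x/f(x)},\sqrt{x}]$. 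The organizational differences are cosmetic: the paper bounds the sums directly with the weight $\exp(-\log 2x/\log 2n)$ rather than by an explicit dyadic decomposition in $k=\lfloor L/B\rfloor$, and for the straddle region the paper is content with the cruder $1/n\le\sqrt{f(x)/x}$ estimate giving $xg(x)\sqrt{f(x)}/(\log 2x)^2$, whereas your $x g(x)\log f(x)/(\log 2x)^2$ is tighter via partial summation on the short interval — both bounds lie within the allowed $xf(x)g(x)/(\log 2x)^2$, so either serves.
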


\begin{proof}
The estimate
\begin{equation}\label{logtheta}
\log \theta(n) = \log(2n)\left(1+ O\left(\frac{\log f(n)}{\log (2n)}\right)\right) \qquad (n\ge 1)
\end{equation}
applied to the first occurrence of $\log \theta(n)$ in Lemma \ref{lem3P} introduces an error of size
\begin{equation*}
\begin{split}
& \ll x \sum_{n\ge 1} \frac{\chi(n) \log f(n)}{n (\log 2n)^2} \exp\left(-\frac{\log 2x}{\log 2 n}\right)\\
& \ll x g(x)\log f(x) \sum_{n\le x}  
\frac{1 }{n (\log 2n)^3} \exp\left(-\frac{\log 2x}{\log 2 n}\right)
+ x \sum_{n> x} \frac{(\log(f(n)))^2}{n (\log 2n)^3} \\
& \ll \frac{x (\log f(x))^2}{(\log 2x)^2} + x \frac{(\log(f(x)))^2}{\log 2x} 
\sum_{n> x} \frac{1}{n (\log 2n)^2}
\ll \frac{x (\log f(x))^2}{(\log 2x)^2}
\end{split}
\end{equation*}
by \eqref{Bbound} and the fact that $(\log(f(n)))^2/\log 2n$ is decreasing for $n$ large enough.

When using \eqref{logtheta} to estimate the second occurrence of $\log \theta(n)$ in Lemma \ref{lem3P}, 
we distinguish between two cases. First, if $u_2> u_1 \ge 1$, we have
$|\omega(u_2)-\omega(u_1)|\ll (u_2-u_1)e^{-u_1}$. 
Thus, the error coming from $n$ where both $\log(x/n)/\log \theta(n)\ge 1$ and $\log(x/n)/\log (2n)\ge 1$, is
\begin{equation*}
\begin{split}
& \ll x \sum_{n\le \sqrt{x}} \frac{\chi(n) (\log f(n)) \log 2x}{n (\log 2n)^3} 
\exp\left(-\frac{\log 2x}{\log 2 n}\right) \\
& \ll x (\log f(x)) g(x) \log 2x \sum_{n\le \sqrt{x}} \frac{1}{n (\log 2n)^4} 
\exp\left(-\frac{\log 2x}{\log 2 n}\right) \\
& \ll \frac{x (\log f(x))^2}{(\log 2x)^2}.
\end{split}
\end{equation*}

Second, if $u_1<1\le u_2$, we have $|\omega(u_2)-\omega(u_1)|= \omega(u_2) \asymp 1$. 
The set of $n$, where one of $\log(x/n)/\log \theta(n)$ and $\log(x/n)/\log (2n)$ is $\ge 1$
and the other is $<1$, is contained in the interval $[\sqrt{x/f(x)},\sqrt{x}]$. The error coming from such $n$ is 
\begin{equation*}
\begin{split}
& \ll x \sum_{ \sqrt{x/f(x)} \le n\le \sqrt{x}} \frac{\chi(n)}{n \log 2n}  \\
& \ll \frac{x}{\sqrt{x/f(x)} \log(2x)}  \sum_{n\le \sqrt{x}}\chi(n) 
\ll \frac{x g(x)\sqrt{f(x)}}{(\log 2x)^2},
\end{split}
\end{equation*}
by \eqref{Bbound}.
\end{proof}

\begin{lemma}\label{lem4P}
For $x\ge 1$, we have
\begin{equation*}
B(x)= x\int_{1}^{\infty} \frac{B(y)}{y^2 \log 2y} 
\left( e^{-\gamma} - \omega\left(\frac{\log x/y}{\log 2y}\right)\right) \mathrm{d}y
 + O\left(\frac{x f(x) g(x)}{(\log 2x)^2}\right).
\end{equation*}
\end{lemma}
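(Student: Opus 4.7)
The plan is to derive Lemma \ref{lem4P} from Lemma \ref{lem3.5P} by Abel summation. Setting $v(y) := \log(x/y)/\log 2y$ and
\[
G(y) := \frac{1}{y \log 2y}\left(e^{-\gamma} - \omega(v(y))\right),
\]
so that the sum in Lemma \ref{lem3.5P} becomes $\sum_{n\ge 1}\chi(n)G(n)$, I first verify that the boundary contribution at infinity vanishes: from $G(y) \ll 1/(y \log 2y)$ and \eqref{Bbound} we have $B(N)G(N) \to 0$. Abel summation (equivalently, Riemann--Stieltjes integration by parts) then gives
\[
\sum_{n\ge 1}\chi(n)G(n) = -\int_1^\infty B(y)\,G'(y)\,\mathrm{d}y \;-\; \frac{B(y_0)}{y_0 \log 2y_0},
\]
where the last term accounts for the single upward jump of $G$ at $y_0 := \sqrt{x/2}$, the point where $v(y) = 1$ and $\omega$ is discontinuous.

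A direct differentiation, using $v'(y) = -\log(2x)/(y(\log 2y)^2)$, gives for $y \ne y_0$
\[
-G'(y) = \frac{e^{-\gamma}-\omega(v(y))}{y^2 \log 2y} + \frac{e^{-\gamma}-\omega(v(y))}{y^2(\log 2y)^2} + \frac{\omega'(v(y))\,v'(y)}{y\log 2y}.
\]
Integrated against $B(y)$, the first summand is exactly the main term of Lemma \ref{lem4P}. It remains to show that the contributions $E_1$, $E_2$ of the other two summands, together with the jump correction, are each $O(f(x)g(x)/(\log 2x)^2)$, so that after multiplication by $x$ they are absorbed into the stated error.

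The three estimates proceed similarly. The jump correction satisfies $B(y_0)/(y_0 \log 2y_0) \ll g(x)/(\log 2x)^2$ by \eqref{Bbound}. For $E_1$ I split at $y_0$: on $[y_0, \infty)$ we have $\omega(v(y)) = 0$, and $\int_{y_0}^\infty g(y)/(y(\log 2y)^3)\,\mathrm{d}y \ll g(x)/(\log 2x)^2$ by a routine integration by parts in $u = \log 2y$ combined with the slow growth of $g$; on $[1, y_0]$ Lemma \ref{omega}(ii) gives $|e^{-\gamma}-\omega(v)| \le 1/\Gamma(v+1)$, and the change of variables $y \mapsto v$ (under which $\mathrm{d}y/y \asymp \log(2x)(v+1)^{-2}\mathrm{d}v$ and $\log 2y \asymp \log(2x)/(v+1)$) reduces the integral to $(\log 2x)^{-2}$ times an absolutely convergent $v$-integral dominated by $g(x)\int_1^\infty (v+1)/\Gamma(v+1)\,\mathrm{d}v$. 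The same change of variables handles $E_2$ via Lemma \ref{omega}(i); since $\omega'(v) = 0$ on $(0, 1)$, the integration already restricts to $y \le y_0$, so no separate tail argument is needed. In every case the bound is $\ll g(x)/(\log 2x)^2 \le f(x)g(x)/(\log 2x)^2$, because $f(x) \ge 1$.

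The main obstacle I anticipate is the bookkeeping at the jump point $y_0$ (ensuring the sign and size of the discrete correction in the integration by parts are right) and setting up the change of variables $y \leftrightarrow v$, which is what converts the rapid decay of $1/\Gamma(v+1)$ from Lemma \ref{omega} into the $(\log 2x)^{-2}$ saving that the proof needs. Once these are in place, every error bound is routine and exactly parallels the unremarked calculations in the proof of Lemma \ref{lem4}.
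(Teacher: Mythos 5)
Your approach is essentially the paper's own: apply partial summation to the sum in Lemma \ref{lem3.5P}, identify the main term from the leading piece of $-G'(y)$, and bound the remainder using \eqref{Bbound} and Lemma \ref{omega}. Your differentiation of $G$, the change of variables $y\leftrightarrow v$ (which correctly converts $1/\Gamma(v+1)$ decay into a $(\log 2x)^{-2}$ saving), the bound on $E_2$, and the treatment of the Stieltjes jump at $y_0=\sqrt{x/2}$ are all right.

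One claim is imprecise: you assert that $\int_{y_0}^\infty g(y)/(y(\log 2y)^3)\,\mathrm{d}y \ll g(x)/(\log 2x)^2$, but since $g$ is non-decreasing you cannot simply bound $g(y)\le g(x)$ on the tail $y>x$. The paper's prescription, which you effectively need here, is to split the integral at $x$ and use $g(y)\le g(x)$ for $y\le x$ but $g(y)\le\log f(y)$ for $y\ge x$; then the hypothesis that $(\log f(y))^2/\log 2y$ is eventually decreasing gives $\log f(y)\le\log f(x)\sqrt{\log 2y/\log 2x}$ for $y\ge x$, and hence $\int_x^\infty \log f(y)/(y(\log 2y)^3)\,\mathrm{d}y\ll\log f(x)/(\log 2x)^2$. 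So the tail of $E_1$ is $\ll \log f(x)/(\log 2x)^2$ rather than $\ll g(x)/(\log 2x)^2$; since $\log f(x)\le f(x)\le f(x)g(x)$ this is still absorbed into the stated error term, so your conclusion stands, but as written your intermediate bound is not justified and would fail when $g(x)=O(1)$ while $\log f(x)\to\infty$. With this correction the proof matches the paper's.
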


\begin{proof}
This follows from applying partial summation to the sum in Lemma \ref{lem3.5P}. When estimating error terms, it is convenient to split the integrals at $x$. Use $g(y)\le g(x)$ for $1\le y \le x$ and 
$g(y) \le \log f(y)$ for $y\ge x$.  All new error terms are found to be $\ll x \log(f(x)) /(\log 2x)^2$ with the help of \eqref{Bbound} and Lemma \ref{omega}. 
\end{proof}

\begin{proof}[Proof of Theorem \ref{thmBh}]
From Lemma \ref{lem4P} we have, for $x\ge 1$,
\begin{equation*}
B(x)= x \, \widetilde{\alpha} - x \int_{1}^{\infty} \frac{B(y)}{y^2 \log 2y} 
\, \omega\left(\frac{\log x/y}{\log 2y}\right)\mathrm{d}y
 + O\left(\frac{x f(x)g(x)}{(\log 2x)^2}\right),
\end{equation*}
where
$$\widetilde{\alpha}:=e^{-\gamma}  \int_{1}^{\infty} \frac{B(y)}{y^2 \log 2y}\, \mathrm{d}y.$$
For $x\ge 1$, let $z\ge 0$ be given by $x=2^{e^z-1}$ and let
$$ \widetilde{G}(z):= \frac{B\left( 2^{e^z -1} \right)}{2^{e^z -1}} e^z = \frac{B(x)}{x} \frac{\log 2x}{\log 2} \ll g(x).$$
The next part of the proof is almost identical to the proof of Theorem \ref{thmD}. Just replace every $\alpha(t)$ by $\widetilde{\alpha}$, $t$ by $2$ and $G_t(z)$ by $\widetilde{G}(z)$. 
After inversion of the Laplace transforms, we get
\begin{equation}\label{Gtilde}
\widetilde{G}(z) = \widetilde{\alpha} G(z) + \int_0^z E(u)\left( -C + O\left(e^{-2(z-u)}\right)\right) \mathrm{d}u + E(z),
\end{equation}
for $z\ge 0$, where 
\begin{equation}\label{Etilde}
E(z) \ll \frac{f(2^{e^z-1})g(2^{e^z-1})}{e^z}
\end{equation}
by Lemma \ref{lem4P}. Note that \eqref{Hdef} and \eqref{Etilde} imply that $E(z) \ll (1+z)^{-\varepsilon}$. Since $G(z) \asymp 1$, \eqref{Gtilde} yields $ \widetilde{G}(z) \ll 1 + (1+z)^{1-\varepsilon}$. Hence, $g\left( 2^{e^z -1} \right) = 1 + (1+z)^{1-\varepsilon}$ is admissible in \eqref{Bbound}, and \eqref{Etilde} now shows that $E(z) \ll (1+z)^{-2\varepsilon}$. 
Thus, $ \widetilde{G}(z) \ll 1 + (1+z)^{1-2\varepsilon}$ by \eqref{Gtilde}. 
After $\lceil 1/\varepsilon \rceil $ such iterations of \eqref{Gtilde} and \eqref{Etilde}, we eventually get $ \widetilde{G}(z) \ll 1$. Thus, $g\left( 2^{e^z -1} \right) = 1$ is admissible and 
$ E(z) \ll f(2^{e^z-1}) e^{-z}.$
We have
\begin{equation*}
 -\int_0^z E(u) \, \mathrm{d}u  = -\int_0^\infty E(u) \, \mathrm{d}u + O\left(\int_z^\infty E(u) \, \mathrm{d}u\right)
  = : \widetilde{\beta} + O\left(h(2^{e^z-1})\right).
\end{equation*}
Since $f(x)$ is non-decreasing,
\begin{equation*}
\begin{split}
\int_0^z E(u)\cdot O\left(e^{-2(z-u)} \right) \mathrm{d}u 
 & \ll  f(2^{e^z-1}) \int_0^z e^{-u} e^{-2(z-u)} \, \mathrm{d}u \\ 
 & \le f(2^{e^z-1}) \, e^{-z} \ll h(2^{e^z-1}).
\end{split}
\end{equation*}
Substituting these estimates into \eqref{Gtilde} yields 
\begin{equation*}
B(x)=x \left(\widetilde{\alpha} d(v) + \frac{C \widetilde{\beta}}{v+1}\right) 
+O\left(\frac{ x h(x) } {\log 2x}\right),
\end{equation*}
for $x\ge1$,  where $v=\log x / \log 2$. Since $h(x)\gg 1/\log 2x$, \eqref{IDD3T1} implies
\begin{equation*}
B(x)= C\left(\widetilde{\alpha} +  \widetilde{\beta}\right) \frac{x\log 2}{\log 2x } 
+O\left(\frac{ x h(x)} {\log 2x}\right).
\end{equation*}
The constant $c_\theta:=C\left(\widetilde{\alpha} +  \widetilde{\beta}\right)\log 2$ is positive since $B(x)\gg x/\log 2x$ by Lemma \ref{Bestimate} and $h(x)=o(1)$ by \eqref{Hdef}.
\end{proof}

\subsection*{Acknowledgements}
The author is grateful to Eric Saias for many valuable conversations on this subject. 
In particular, the nature of the image set of $F(n)/n$ and the order of magnitude of $M(x,p/m)$ 
came to light during one of our discussions.

\end{document}